\newcommand{\vx}{{\bf x}}
\newcommand{\ol}{\overline}
\newcommand{\diam}{\operatorname{diam}}
\newcommand{\be}{\begin{equation}}
\newcommand{\ee}{\end{equation}}
\newcommand{\lll}{\left}
\newcommand{\rrr}{\right}
\newcommand{\thh}{\tilde{h}}
\newcommand{\tg}{\tilde{g}}
\newcommand{\tnabla}{\tilde{\nabla}}
\newcommand{\tD}{\tilde{D}}
\newtheorem{theorem}{Theorem}[section]
\newtheorem{lemma}[theorem]{Lemma}
\theoremstyle{definition}
\theoremstyle{remark}
\newtheorem{remark}[theorem]{Remark}
\numberwithin{equation}{section}
\begin{document}
\setlength{\baselineskip}{1.2\baselineskip}

\title[The Weyl Problem In Hyperbolic Space]
{The Weyl Problem With Nonnegative Gauss Curvature In Hyperbolic Space}

\author{Jui-En Chang}

\author{Ling Xiao}


\subjclass[2010]{Primary 53A99; Secondary 35J15, 58J05.}


\begin{abstract}
In this paper, we discuss the isometric embedding problem in hyperbolic space with nonnegative extrinsic curvature.
We prove a priori bounds for the trace of the second fundamental form $H$ and extend the result to $n$-dimensions.
We also obtain an estimate for the gradient of the smaller principal curvature in 2 dimensions.
\end{abstract}

\maketitle

\section{Introduction} \label{Int}

In 1916, H. Weyl posed the following problem: Consider a two-sphere $S^2$ and suppose $g$
is a Riemannian metric on $S^2$ whose Gauss curvature is everywhere positive. Does there exist
a global $C^2$ isometric embedding $X:(S^2, g)\rightarrow(\mathbb{R}^3, \sigma),$ where $\sigma$ is the standard
flat metric in $\mathbb{R}^3$?

The first attempt to solve the problem was made by Weyl himself. He used the continuity method to obtain
a priori estimates up to the second derivatives. In 1953, L. Nirenberg \cite{Ni53} gave a complete solution under
the very mild hypothesis that the metric $g$ has continuous fourth derivative.

In 1964, A.V.Pogorelov \cite{Po64} obtained several important refinements to the differential geometry of 2-dimensional submanifolds $F$ of smooth 3-dimensional Riemannian manifolds $R$. The proof is based on derivative estimates which refine and complete his earlier work with A. D. Aleksandrov \cite{AP50}

The Weyl estimate was later generalized to the case of nonnegative curvature in Euclidean space by J.A. Iaia \cite{Ia92} and P. Guan, Y. Li in
\cite{GL94}. They obtained a $C^{1,1}$ embedding result for metrics of nonnegative Guass curvature; see also \cite{HZ95}
for a different approach to the $C^{1,1}$ embedding result. Later in 1999, Y. Li and G. Weinstein \cite {LW99} extended the estimates obtained in
\cite{GL94} to n-dimensions.

In this paper, we discuss the isometric embedding problem in hyperbolic space with nonnegative extrinsic Gauss curvature.
As in the Euclidian case, the image of such an embedding, if it exists, bounds a convex body.
However, the loss of strict positivity of $K$ leads to degenerate Monge-Ampere equations which arises difficulties.

By combining A.V. Pogorelov's method with the results obtained by B. Guan, J. Spruck and M.  Szapiel \cite{GSS09} and
\cite{GS11}, we can also obtain a priori bounds for principal curvatures of any strictly convex closed hypersurface with positive sectional curvature in $\mathbb{H}^{n+1}$.
Moreover, we proved that when the sectional curvature is equal to $-1$ at finitely many points, the above statement is still true.

For the case of strictly positive Gauss curvature, it's well known that the regularity exists, because  the corresponding
PDE is uniformly elliptic. Thus, if the given metric is $C^{\infty},$ then the resulting embedding is $C^{\infty}.$
However, this does not hold in the case under consideration. So, another natural question to ask is the regularity of
the embedding.

The main theorems of this paper are the following:

\begin{theorem}
\label{Intt1}
Let $g\in C^4$ be a Riemannian metric on $S^2$ with Gauss curvature satisfying\\
1) $K(P_i)=-1$, $1\leq i\leq n;$\\
2) $K(Q)> -1$ for any $Q\neq P_i,$
where $\{P_i\}\in S^2$ are finite isolated points.
Then there exists a $C^{1,1}$ isometric embedding $X:(S^2, g)\rightarrow(\mathbb{H}^3, h)$
where $h=\frac{4}{(1-|x|^2)^2}(dx_1^2+dx_2^2+dx_3^2).$
\end{theorem}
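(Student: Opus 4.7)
\medskip
\noindent\textbf{Proof proposal.}
The natural approach is a continuity/approximation scheme together with uniform a priori estimates. First, approximate $g$ by a family of smooth metrics $g_\epsilon$ on $S^2$ with Gauss curvature $K_{g_\epsilon}>-1$ \emph{strictly} everywhere, and with $g_\epsilon\to g$ in $C^4$ as $\epsilon\to 0$. Since by hypothesis $K_g+1\ge 0$ with zero set contained in the finite set $\{P_i\}$, such a perturbation can be produced by a small conformal change (or by the explicit construction of Guan--Li adapted to the hyperbolic setting), and it is harmless to arrange that $g_\epsilon\ge g$ pointwise. For each fixed $\epsilon>0$ the extrinsic Gauss curvature of any isometric immersion into $\mathbb{H}^3$ is $\kappa_1\kappa_2=K_{g_\epsilon}+1>0$, so the associated Monge--Amp\`ere equation is uniformly elliptic, and by the results of B.~Guan--Spruck--Szapiel and Guan--Spruck referenced in the paper one obtains a smooth strictly locally convex isometric embedding $X_\epsilon:(S^2,g_\epsilon)\to(\mathbb{H}^3,h)$.

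The central step is to establish uniform $C^{1,1}$ estimates for the family $\{X_\epsilon\}$, independent of $\epsilon$. Because $\kappa_1^\epsilon\kappa_2^\epsilon=K_{g_\epsilon}+1\ge 0$ and both principal curvatures have the same sign (strict convexity), it suffices to bound the mean curvature $H^\epsilon=\kappa_1^\epsilon+\kappa_2^\epsilon$ from above: then $0\le \kappa_i^\epsilon\le H^\epsilon$ controls the full second fundamental form. This is precisely the a priori bound on $H$ highlighted in the abstract, and the plan is to prove it by a Pogorelov-type auxiliary function applied to the support function (or the height function along a geodesic foliation) of $X_\epsilon$, using the maximum principle for the linearized operator of $\kappa_1\kappa_2=K+1$. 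The $C^0$ and $C^1$ bounds are obtained first from a diameter estimate and from strict convexity of $X_\epsilon(S^2)\subset\mathbb{H}^3$.

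Once the uniform $C^{1,1}$ bound is in place, pass to the limit by Arzel\`a--Ascoli: extract a subsequence $X_\epsilon\to X$ in $C^{1,\alpha}$ with $X\in W^{2,\infty}=C^{1,1}$, verify that $X$ is isometric with respect to $g$ (using $g_\epsilon\to g$ in $C^2$), and that $X$ is injective using the convexity of the limiting image in $\mathbb{H}^3$.

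The main obstacle is the uniform upper bound on $H^\epsilon$ near the degenerate points $P_i$, since the classical Pogorelov/Weyl estimate uses uniform ellipticity, whereas here $K_{g_\epsilon}+1$ can be made arbitrarily small along the sequence. The resolution should exploit the isolated, finite nature of the zero set of $K_g+1$ together with the $C^4$ regularity of $g$: localize near each $P_i$, use the intrinsic geometry of $(S^2,g)$ (not of $g_\epsilon$) to set up the auxiliary function, and extract bounds that depend only on $g$ and the background hyperbolic geometry. All other ingredients, the $C^0$, $C^1$ estimates and the limit argument, are standard once $H^\epsilon$ is controlled.
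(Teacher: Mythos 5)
Your overall skeleton (approximate by metrics with $K_{g_\epsilon}>-1$ strictly, embed each $g_\epsilon$ smoothly, prove a uniform bound on $H^\epsilon$, reduce the full second fundamental form to $H^\epsilon$ via convexity, and pass to the limit by Arzel\`a--Ascoli) is exactly the paper's strategy. But the proposal has a genuine gap precisely at the step you yourself flag as ``the main obstacle'': the uniform upper bound on $H^\epsilon$ near the degenerate points is never actually proved. You state that ``the resolution should exploit the isolated, finite nature of the zero set \ldots and extract bounds that depend only on $g$,'' but you do not exhibit the auxiliary function, the maximum-principle computation, or the reason the resulting bound is independent of $\epsilon$. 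That computation is the entire content of the theorem; without it the argument is a plan, not a proof.

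What the paper actually does there is worth recording, because it also corrects a misdiagnosis in your proposal. The finiteness of the singular set is used only to invoke Pogorelov's interior estimate on $S^2\setminus B_r(P_0)$, where $K\ge\delta>-1$ uniformly; it plays no role in the local estimate near $P_0$. Near $P_0$ one works in the ball model with the origin chosen close to $X(P_0)$ (so $|x|^2<1/100$ on the relevant neighborhood) and applies the maximum principle to $f=e^{\alpha\rho}H$ with $\rho=2/(1-|x|^2)$ and $\alpha=2$. At an interior maximum one passes to conformal coordinates, uses the Gauss and Mainardi--Codazzi equations to write $\Delta_g K$ in terms of $NH_{11}-2MH_{12}+LH_{22}$ plus controlled terms, and uses the negative semidefiniteness of $\{f_{ij}\}$ together with the positive semidefiniteness of $(L,M,N)$ to arrive at $\Delta_g K\le -H^2+C(K)$. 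The decisive point --- which your appeal to ``uniform ellipticity'' obscures --- is that the right-hand side of the resulting inequality $H^2\le C(K)-\Delta_g K$ involves only the \emph{intrinsic} quantities $K$ and $\Delta_g K$, which are uniformly bounded by the $C^4$ convergence $g_\epsilon\to g$; the degeneracy of $K+1$ is therefore irrelevant to the $H$ estimate (it only obstructs higher regularity, which is why the conclusion is merely $C^{1,1}$). Two smaller inaccuracies: the smooth embeddings of the $g_\epsilon$ come from Pogorelov's theorem, not from the Guan--Spruck--Szapiel results (which concern constant-curvature hypersurfaces with prescribed asymptotic boundary); and the normalization $g_\epsilon\ge g$ is neither needed nor used.
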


By studying the regularity of the embedding, we also prove
\begin{theorem}
\label{Intt2}
If $g\in C^5,$ under the hypothesis of Theorem \ref{Intt1} and also assume that
$\lim\sup_{Q\rightarrow P_i}|\nabla K|^2/K=C<\infty$ and $\lim\inf_{Q\rightarrow P_i} H(Q)=c>0,$
then $\kappa_1\in C^{0, 1}$ in $B_r(P_i),$ where $\kappa_1$ is the smaller of the two principal curvatures and $r>0$ sufficiently small.
\end{theorem}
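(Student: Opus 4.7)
The plan is to work locally near each degenerate point $P_i$ and use the Gauss equation $\kappa_1\kappa_2 = K+1$ in $\mathbb{H}^3$ to decompose the problem. Since $\kappa_1(P_i)=0$ and $\liminf_{Q\to P_i}H(Q)\geq c>0$, one has $\kappa_2(P_i)\geq c$, and by continuity $\kappa_2\geq c/2$ throughout a ball $B_r(P_i)\subset S^2$. Hence $\kappa_1 = (K+1)/\kappa_2$ and
\begin{equation*}
\nabla\kappa_1 = \frac{\nabla(K+1)}{\kappa_2} - \frac{(K+1)\,\nabla\kappa_2}{\kappa_2^{\,2}},
\end{equation*}
so the problem reduces to bounding the two summands on the right.

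The first summand is controlled by the hypothesis itself: $\limsup_{Q\to P_i}|\nabla K|^2/(K+1)\leq C$ is equivalent to $\sqrt{K+1}$ being locally Lipschitz, which gives $|\nabla(K+1)|\leq C'\sqrt{K+1}$, and $\kappa_2\geq c/2$, so this summand is bounded in $B_r(P_i)$. For the second summand one must show that $(K+1)|\nabla\kappa_2|$ remains bounded, that is, $|\nabla\kappa_2|$ blows up no faster than $1/(K+1)$ as $Q\to P_i$.

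To obtain this I would choose a chart of $\mathbb{H}^3$ around $X(P_i)$ in which $\Sigma=X(S^2)$ is the graph $x_3=u(x_1,x_2)$, with the $x_2$-axis aligned with the eigenvector of $\kappa_2$. The embedding equation takes the Monge--Amp\`ere form $\det D^2 u = (K+1)\,\Phi(x,u,Du)$ for a smooth positive factor $\Phi$ coming from the hyperbolic background, and $u_{x_2 x_2}\geq c/4$ in a smaller ball. A partial Legendre transform in $x_2$, namely $y_1=x_1$, $y_2=u_{x_2}$, $v(y_1,y_2)=x_2 y_2 - u$, then yields $v_{y_2 y_2}=1/u_{x_2 x_2}$ and $-v_{y_1 y_1}=\det D^2 u/u_{x_2 x_2}$, so that $v$ satisfies the linear second order equation
\begin{equation*}
v_{y_1 y_1} + (K+1)\,\tilde\Phi\, v_{y_2 y_2} = 0,
\end{equation*}
with $\tilde\Phi>0$ smooth, $v_{y_2 y_2}$ bounded above and below, and $-v_{y_1 y_1}$ equal to $\kappa_1$ up to a bounded positive factor. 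The equation is uniformly elliptic in the $y_2$-direction. Differentiating this identity in $y_j$ and combining interior Schauder estimates in the nondegenerate $y_2$-direction with the Lipschitz control on $K+1$ obtained in the previous paragraph produces the required bound on $(K+1)|\nabla\kappa_2|$.

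The main obstacle is this last step. The coefficient $(K+1)\tilde\Phi$ vanishes at $P_i$, so standard Schauder theory does not apply in both variables simultaneously, and one cannot expect Lipschitz control on $\kappa_2$ alone. The argument has to exploit the precise vanishing rate encoded in $|\nabla K|^2\leq C(K+1)$ to trade the degeneracy in the $y_1$-direction against the uniform ellipticity in the $y_2$-direction; the output of this trade-off is not Lipschitz regularity of $\kappa_2$ but only the weighted bound on $(K+1)\nabla\kappa_2$, which is nevertheless exactly what is needed to conclude $\kappa_1\in C^{0,1}$ in $B_{r'}(P_i)$ via the pointwise identity above.
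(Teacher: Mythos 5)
Your opening reduction is essentially the paper's: with $\kappa_2$ bounded below near $P_i$ (from $\liminf H=c>0$ and $\kappa_1\to 0$), everything comes down to a single weighted estimate, $(K+1)\lvert\nabla\kappa_2\rvert\leq C$, equivalently $K_{\mathrm{ext}}\lvert\nabla H\rvert\leq C$. (The paper writes $\kappa_1=H-\sqrt{H^2-K_{\mathrm{ext}}}$ rather than $\kappa_1=K_{\mathrm{ext}}/\kappa_2$, but differentiating either identity isolates exactly this quantity, and the other terms are handled by $g\in C^5$ and the lower bound $H^2-K_{\mathrm{ext}}\geq c_0>0$.) The problem is that you do not prove this estimate, and you say so yourself: the partial-Legendre/Schauder paragraph is a plan, not an argument. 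Concretely, the transformed equation $v_{y_1y_1}+(K+1)\tilde\Phi\,v_{y_2y_2}=0$ is uniformly elliptic only in the $y_2$-direction, so Schauder theory applied there controls only $y_2$-derivatives; it gives no information on $\partial_{y_1}\kappa_2$, which is precisely the derivative that can blow up as the coefficient $(K+1)$ degenerates. No mechanism is supplied that converts the vanishing rate $\lvert\nabla K\rvert^2\leq C(K+1)$ into the weighted third-derivative bound, and without it the second summand in your decomposition of $\nabla\kappa_1$ is uncontrolled. A minor further point: the limiting embedding is a priori only $C^{1,1}$, so the estimate must be run uniformly on smooth approximating embeddings $X_\epsilon$ with $K^\epsilon>0$ and $H^\epsilon\geq c/2$ and then passed to the limit, as the paper does; your argument works directly on $\Sigma$ where third derivatives of $u$ need not exist.

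The missing step is the entire technical content of the paper's Section \ref{pot}. There, the cap is written as a graph $x_3=u(x)$ in the half-space model with convex potential $\rho=-(\lvert x\rvert^2+u^2)/2$ satisfying $\det\rho_{ij}=K(1+\lvert\nabla u\rvert^2)^2$; Lemma \ref{ptdl1} is a Calabi--Pogorelov type differential inequality $L\sigma\geq\tfrac{1}{2}\sigma^2-C/K^4$ for the third-order quantity $\sigma=\rho^{kl}\rho^{pq}\rho^{rs}\rho_{kpr}\rho_{lqs}$, in which the hypothesis $\lvert\nabla K\rvert^2/K\leq C$ enters to control the inhomogeneous terms; Lemma \ref{ptdl2} then applies the maximum principle to $K^2\sigma$ to conclude $K^2\sigma\leq C$ and hence $K\lvert\nabla H\rvert\leq C$. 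To complete your proposal you would need either to carry out such a third-derivative estimate or to genuinely execute the partial Legendre argument with a quantitative bound in the degenerate $y_1$-direction; as written, the key analytic step is absent.
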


We extend the results of Theorem \ref{Intt1} to higher dimensions by bounding the mean curvature $H$ in terms of the scalar curvature and its Laplacian.
This is a direct generalization of the Weyl estimate.

\begin{theorem}
\label{Intt3}
Let $g$ be a $C^4$ metric with sectional curvature $\geq -1$ and let $X: (S^n, g)\rightarrow \mathbb{H}^{n+1}$ be a $C^4$ isometric embedding.
Suppose the sectional curvature $S$ of $g$ satisfies\\
(1) $S(P_i, \chi)=-1$ for some $\chi\in\bigwedge^2T_{P_i}M;$ $1\leq i\leq n,$\\
(2) $S(Q)>-1,$ for any $Q\neq P_i,$ where $\{P_i\}\in S^n$ are finite isolated points.\\
Let $H$ be the trace of the second fundamental form of $X,$ and
let $R$ be the extrinsic scalar curvature of $g.$ Then, the following inequality holds:
\be\label{int1}
H^2\leq C_1|\Delta R|+C_2(R^2+R).
\ee
where $C_1,$ $C_2$ depends only on the metric $g$ and dimension $n.$
\end{theorem}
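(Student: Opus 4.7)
The plan is to adapt the classical Weyl $C^2$ estimate to the hyperbolic setting, combining Simons' identity in $\mathbb{H}^{n+1}$ with a maximum-principle argument.

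\emph{Convexity from Gauss.} By the Gauss equation in $\mathbb{H}^{n+1}$, the intrinsic sectional curvature equals $\kappa_i \kappa_j - 1$ in the $(i,j)$-principal plane, so the hypothesis $S \geq -1$ forces $\kappa_i \kappa_j \geq 0$ at every point. All principal curvatures therefore share a common sign and, after reversing the unit normal if necessary, we may assume $\kappa_i \geq 0$ throughout. Hence $H \geq 0$, $R = \sigma_2(\kappa) \geq 0$, and the first Newton tensor $T_1 := H\operatorname{Id} - A$ is positive semidefinite.

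\emph{Simons identity and the $\Delta R$ formula.} Combining $R = \tfrac12(H^2 - |A|^2)$ with Simons' identity in $\mathbb{H}^{n+1}$,
\[
\tfrac12 \Delta |A|^2 = |\nabla A|^2 + h_{ij}\nabla_i\nabla_j H + H\operatorname{tr}(A^3) - |A|^4 - n|A|^2 + H^2,
\]
in which $-n|A|^2 + H^2$ is the correction from the ambient curvature $-1$, and regrouping via $H\Delta H - h_{ij}\nabla_i\nabla_j H = \operatorname{tr}(T_1 \nabla^2 H)$, one obtains
\[
\Delta R = \operatorname{tr}(T_1 \nabla^2 H) + |\nabla H|^2 - |\nabla A|^2 - (H\operatorname{tr}(A^3) - |A|^4) + (n|A|^2 - H^2).
\]
On the convex locus, the identities $H\operatorname{tr}(A^3) - |A|^4 = \tfrac12\sum_{i,j}\kappa_i\kappa_j(\kappa_i - \kappa_j)^2 \geq 0$ and $n|A|^2 - H^2 = \sum_{i<j}(\kappa_i - \kappa_j)^2 \geq 0$ give sign-definite pieces, and at a maximum of $H$ the positive semidefiniteness of $T_1$ combined with $\nabla^2 H \leq 0$ gives $\operatorname{tr}(T_1 \nabla^2 H) \leq 0$ together with $|\nabla H|^2 = 0$.

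\emph{Extraction and main obstacle.} At the maximum of $H$, the sign facts above collapse the $\Delta R$ identity to an inequality of the shape $\Delta R \leq (n|A|^2 - H^2) - \mathcal{N}(H,A,R)$ for a non-negative quantity $\mathcal{N}$ gathering the cubic correction and the ambient-curvature terms. The 2D Euclidean Pogorelov argument produces $K H^2 \leq |\Delta K| + 4K^2$ by this route, and the analogous hyperbolic calculation in dimension two yields $(R-1)(H^2 - 4R) \leq |\Delta R|$ in the regime $R > 1$; the $n$-dimensional version amounts to rearranging the $\Delta R$ identity above into the claimed form $H^2 \leq C_1|\Delta R| + C_2(R^2 + R)$, with the Newton--MacLaurin bound $H^2 \leq \tfrac{2n}{n-1}R$ handling the near-umbilic regime where the positive-definite defect $n|A|^2 - H^2$ degenerates. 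The main obstacle I anticipate is uniform control near the exceptional points $P_i$ where $R \to 0$ and, when $n=2$, near the transition $R \approx 1$ where the ambient-curvature correction changes sign; the mixed $R^2 + R$ term on the right-hand side is precisely what is needed to absorb these singular contributions, so that the constants $C_1, C_2$ depend only on $n$ and on $g$ through its intrinsic a priori bounds.
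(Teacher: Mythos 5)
There is a genuine gap. Your skeleton (Gauss equation $\Rightarrow$ convexity, Simons' identity for $\Delta R$, the Newton tensor $T_1=H\operatorname{Id}-A\geq 0$, a maximum principle) matches the paper's computation, but the maximum principle is applied to the wrong function. At an interior maximum of $H$ itself, your inequality collapses to
\[
\Delta R \;\leq\; -\tfrac12\sum_{i,j}\kappa_i\kappa_j(\kappa_i-\kappa_j)^2 \;+\; \sum_{i<j}(\kappa_i-\kappa_j)^2 ,
\]
and neither term produces a negative multiple of $H^2$: the quartic term degenerates exactly at the points $P_i$ where some $\kappa_i\to 0$ (it only yields an estimate of the shape $R\,H^2\lesssim |\Delta R|+\cdots$, the classical nondegenerate Weyl bound, which is vacuous as $R\to 0$), while the quadratic excess $\sum_{i<j}(\kappa_i-\kappa_j)^2$, coming from the ambient curvature $-1$, is a \emph{positive} term of size up to $(n-1)H^2$ with nothing available to absorb it. Your proposed rescue, the ``Newton--MacLaurin bound $H^2\leq \tfrac{2n}{n-1}R$,'' is reversed: MacLaurin gives $H^2\geq \tfrac{2n}{n-1}R$, so it cannot control $H$ by $R$ in the near-umbilic regime. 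As written, the argument cannot produce $H^2$ alone on the left-hand side.

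The missing idea is the weighted test function. The paper maximizes $f=e^{\alpha/u}H$, where $u$ is the height function in the upper half-space model (the $n$-dimensional analogue of the $e^{\alpha\rho}H$ trick in Section 2). The key hyperbolic identity $\nabla_{ij}\tfrac1u=\tfrac1u\bigl(g_{ij}-\nu^{n+1}h_{ij}\bigr)$ shows that, upon contracting the second-derivative test $(Hg^{ij}-h^{ij})f_{ij}\leq 0$ with the Newton tensor, the weight contributes $-\alpha(n-1)H^2/u$ plus terms controlled by $HR$ and $\alpha^2 d^2H^2/u^2$ with $d^2=1-(\nu^{n+1})^2$ small on a small cap. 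Choosing $\alpha$ large (depending only on $\max u/\min u$) this strictly negative $H^2$-term dominates both the $(2n-2)H^2$ excess from the ambient curvature and the $\alpha^2$-terms, yielding $u^2\Delta R\leq -\tfrac12(n-1)^2H^2+C_2(R^2+R)$ and hence the theorem. Without some such weight whose Hessian is uniformly comparable to $g_{ij}$ modulo the second fundamental form, the estimate cannot be closed near the degenerate points.
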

An outline of the contents of the paper are as follows. Section \ref{mce} contains the estimate of mean curvature and proves
Theorem \ref{Intt1}. Section \ref{dmh} introduces the relations between different models of hyperbolic space, and also state
how to transform the coordinates of a small neighborhood to simplify the calculations, which will be used in Section \ref{pot}
to prove Theorem \ref{Intt2} and also in Section \ref{apb} to prove Theorem \ref{Intt3}. The use of the transform map between
two different models is unusual, but seems to be necessary in proving the partial third derivative estimates.
\bigskip

\section{A mean curvature estimate} \label{mce}
\begin{theorem}
\label{mcet1}
Let $g\in C^4$ be a Riemannian metric on $S^2$ with Gauss curvature satisfying\\
1) $K(P_i)=-1,$ $1\leq i\leq n,$\\
2) $K(Q)> -1$ for any $Q\neq P_i,$
where $\{P_i\}\in S^2$ are finite isolated points.
Then, there exists a $C^{1,1}$ isometric embedding $X:(S^2, g)\rightarrow(\mathbb{H}^3, h)$
where $h=\frac{4}{(1-|x|^2)^2}(dx_1^2+dx_2^2+dx_3^2).$
\end{theorem}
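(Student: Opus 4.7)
The strategy is a regularization argument in the spirit of Guan--Li's treatment of the Euclidean Weyl problem with nonnegative curvature, with the principal analytic input furnished by Theorem \ref{Intt3} (whose proof is deferred to later sections). First I would approximate $g$ by metrics $g_\epsilon := g + \epsilon g_0$, where $g_0$ is a fixed smooth round metric on $S^2$ of strictly positive Gauss curvature. For $\epsilon > 0$ sufficiently small one has $K_{g_\epsilon} > -1$ strictly on all of $S^2$, and $g_\epsilon \to g$ in $C^4$ as $\epsilon \to 0$. For each such $g_\epsilon$ the hypotheses of the smooth existence theorems of B.~Guan--Spruck--Szapiel \cite{GSS09} and Guan--Spruck \cite{GS11} are satisfied, producing a smooth strictly convex isometric embedding $X_\epsilon : (S^2, g_\epsilon) \to \mathbb{H}^3$.

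The problem then reduces to extracting uniform a priori bounds on the $X_\epsilon$ strong enough to pass to a $C^{1,1}$ limit. $C^0$ bounds follow from the $g$-diameter together with a normalization of base point and orientation; $C^1$ bounds are built into the isometry condition. The decisive step is a uniform upper bound on the mean curvature $H_\epsilon$: since each image is convex and the extrinsic Gauss curvature $\kappa_1^\epsilon \kappa_2^\epsilon = K_{g_\epsilon} + 1$ is bounded above by the intrinsic data, an upper bound on $H_\epsilon = \kappa_1^\epsilon + \kappa_2^\epsilon$ immediately yields uniform two-sided bounds on each principal curvature. To bound $H_\epsilon$ I would invoke Theorem \ref{Intt3} in dimension two:
\[
H_\epsilon^2 \leq C_1 |\Delta_{g_\epsilon} R_\epsilon| + C_2(R_\epsilon^2 + R_\epsilon),
\]
where $R_\epsilon$ is the extrinsic scalar curvature of $X_\epsilon$. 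Via the Gauss equation $R_\epsilon$ is a smooth function of the intrinsic curvature of $g_\epsilon$ alone, so the right-hand side is controlled by the $C^4$ norm of $g_\epsilon$, which is bounded uniformly in $\epsilon$. Hence $\|H_\epsilon\|_{L^\infty} \leq C$ independently of $\epsilon$.

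With a uniform $C^{1,1}$ bound in hand, compactness is standard. By Arzel\`a--Ascoli the family $\{X_\epsilon\}$ is precompact in $C^{1,\alpha}(S^2, \mathbb{H}^3)$ for every $\alpha \in (0,1)$, and the Hessians $\nabla^2 X_\epsilon$ are uniformly bounded in $L^\infty$. Any subsequential limit $X$ therefore lies in $C^{1,1}$, and since $g_\epsilon \to g$ in $C^4$, $X$ is an isometric embedding of $(S^2, g)$ into $\mathbb{H}^3$; convexity of the image is preserved in the limit.

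The main obstacle in this scheme is not the regularization or limiting procedure but the derivation of the mean curvature inequality itself. Weyl's classical identity writes $\Delta K$ as a second-order operator in the second fundamental form, and in the Euclidean setting the Codazzi equations allow one to solve for $H^2$ in terms of $\Delta K$ and lower-order terms. In $\mathbb{H}^3$ the commutators produced by covariant differentiation on the hypersurface contribute additional terms involving the ambient curvature; these are exactly what account for the new linear term $C_2 R$ on the right-hand side, and the delicate point is to check that they do not destroy the quadratic-in-$H$ structure on the left. The isolatedness of the points $\{P_i\}$ plays no role in the pointwise estimate, but it guarantees that $\Delta R$ remains globally bounded despite the degeneracy $R \to 0$ at those points, so the uniform bound on $H_\epsilon$ survives the passage $\epsilon \to 0$.
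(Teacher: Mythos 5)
Your overall skeleton --- regularize to $K^\epsilon>-1$, produce smooth embeddings, prove a uniform mean curvature bound, pass to a $C^{1,1}$ limit --- is exactly the paper's. But there are concrete problems. First, the existence step is mis-sourced: \cite{GSS09} and \cite{GS11} solve asymptotic Plateau-type problems for hypersurfaces of constant curvature with prescribed boundary at infinity; they do not produce an isometric embedding of a prescribed metric on $S^2$. The smooth embeddings $X^\epsilon$ of the regularized metrics come from Pogorelov's solution of the Weyl problem in Riemannian ambient spaces \cite{Po64}, which is what the paper invokes. Relatedly, the specific approximation $g_\epsilon=g+\epsilon g_0$ is not justified: the Gauss curvature of a sum of metrics is not monotone in the summands, so $K_{g_\epsilon}>-1$ does not follow from $K_{g_0}>0$; the approximation must be built so as to control the curvature directly (as in \cite{GL94}).

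Second, and more importantly, the entire content of Theorem \ref{mcet1} beyond soft compactness is the uniform mean curvature estimate, and you do not prove it --- you invoke Theorem \ref{Intt3}. The paper's proof here is a self-contained two-dimensional computation: the test function $f=e^{\alpha\rho}H$ with $\rho=2/(1-|x|^2)$ in the ball model, conformal coordinates at the interior maximum, the Codazzi equations and the Weyl identity for $\Delta_g K$, yielding $\Delta_gK\le -H^2+C(K)$ at the maximum and hence $\max H\le C$; the region away from the singular points is handled separately by Pogorelov's interior estimate. Theorem \ref{Intt3} is proved only later, by a different computation in the half-space model, and is explicitly presented as the generalization of the Section 2 result to $n\ge 3$; using it to prove the $n=2$ case inverts the paper's logic. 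To your credit the Section 5 argument does degenerate correctly at $n=2$ (the coefficient $-\tfrac12(n-1)^2$ in \eqref{apb9} is still negative, and the symmetric-function inequality used in \eqref{apb01} holds with equality for two variables), and in 2D one has $R=2(K+1)$ so the right-hand side of \eqref{int1} is intrinsic and uniformly controlled, as you claim. So your route is not circular in substance, but as written the proposal omits the one estimate that constitutes the theorem's proof, and would need the $n=2$ validity of the Section 5 argument (which the paper restricts to $n\ge3$) to be checked explicitly.
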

\begin{proof}
Following \cite{GL94} we first approximate $g^0$ in $C^4$ by a sequence of $C^\infty$ metric $g^{\epsilon}$
with corresponding intrinsic Guass curvature $\{K^\epsilon\},$ such that $K^\epsilon>-1$ everywhere.
Then, we can apply the result from \cite{Po64} to $g^\epsilon$ and therefore obtain a sequence of $C^\infty$ isometric embeddings
\[X^\epsilon:(S^2, g^0)\rightarrow(\mathbb{H}^3, h).\]
It's not difficult to see that there exists constants $\alpha,$ $\beta>0$ (independent of $\epsilon$), such that
 for all $\epsilon>0,$
 \[-1<K^{\epsilon}<\alpha\]
 and
 \[diam\lll(X^{\epsilon}\rrr)<\beta.\]
 We immediately have that
 \[\|X^{\epsilon}\|_{C^0}\leq C.\]

 In local coordinates,
 \[g^0=E^0du^2+2F^0dudv+G^0dv^2,\]
 \[g^\epsilon=E^\epsilon du^2+2F^\epsilon dudv+G^\epsilon dv^2.\]
 We already know that $X^\epsilon: (S^2, g^\epsilon)\rightarrow (\mathbb{H}^3, h)$
 is an isometric embedding, so we have
 \[\lll<X^\epsilon_u,X^\epsilon_u\rrr>_{\mathbb{H}^3}=E^\epsilon,\; \lll<X^\epsilon_u,X^\epsilon_v\rrr>_{\mathbb{H}^3}=F^\epsilon,\;
\lll<X^\epsilon_v,X^\epsilon_v\rrr>_{\mathbb{H}^3}=G^\epsilon.\]
It follows easily that
\[\|\nabla_{g^0}X^\epsilon\|_{C^0}\leq C,\]
where C is independent of $\epsilon.$

 The following will be devoted to establishing a bound on
$\|\nabla^2_{g^0}X^\epsilon\|_{C^0},$ which is independent of $\epsilon.$ Once we obtain such a bound, the limit of $X^\epsilon$ as $\epsilon\rightarrow 0$
will be a $C^{1,1}$ isometric embedding of $g^0.$ For convenience, in the following we drop the dependence on $\epsilon$
in our notation.

Next, we will prove
\begin{equation}
\label{eqm1}
\max_{S^2}H\leq C,
\end{equation}
where $C$ is a constant independent of $\epsilon.$

In hyperbolic space we have
\be
\label{eqm2}
\frac{4}{(1-|x|^2)^2}\left<X_u, X_u\right>_{\mathbb{R}^3}=\left<X_u, X_u\right>_{\mathbb{H}^3}=E=g_{11},
\ee
\be
\label{eqm3}
\frac{4}{(1-|x|^2)^2}\left<X_u, X_v\right>_{\mathbb{R}^3}=\left<X_u, X_v\right>_{\mathbb{H}^3}=F=g_{12},
\ee
\be
\label{eqm4}
\frac{4}{(1-|x|^2)^2}\left<X_v, X_v\right>_{\mathbb{R}^3}=\left<X_v, X_v\right>_{\mathbb{H}^3}=G=g_{22}.
\ee
Let the orientation be chosen so that the inner unit normal is given by
\be
\label{eqm5}
\ol{X}=\frac{X_u\times X_v}{\sqrt{EG-F^2}}=\frac{2}{1-|x|^2}\ol{X}_E,
\ee
where $\ol{X}_E$ is the Euclidean unit normal.
The second fundamental form is then given by
\[II=Ldu^2+2Mdudv+Ndv^2\]
where
\be\label{eqm6}
L=-\left<X_u, \ol{X}_u\right>_{\mathbb{H}^3}=\left<\nabla_{X_u}X_u, \ol{X}\right>_{\mathbb{H}^3},
\ee
\be\label{eqm7}
M=-\left<X_v, \ol{X}_u\right>_{\mathbb{H}^3}=\left<\nabla_{X_u}X_v, \ol{X}\right>_{\mathbb{H}^3},
\ee
\be\label{eqm8}
N=-\left<X_v, \ol{X}_v\right>_{\mathbb{H}^3}=\left<\nabla_{X_v}X_v, \ol{X}\right>_{\mathbb{H}^3}.
\ee
Hence the intrinsic Gauss and mean curvature are:
\be\label{eqm9}
K=-1+\frac{LN-M^2}{EG-F^2}
\ee
and
\be\label{eqm10}
H=\frac{1}{2}\frac{GL-2FM+EN}{EG-F^2}.
\ee
The Gauss equation takes the form:
\be\label{eqm11}
\nabla_{X_u}X_u=\Gamma^1_{11}X_u+\Gamma^2_{11}X_v+L\ol{X},
\ee
\be\label{eqm12}
\nabla_{X_u}X_v=\Gamma^1_{12}X_u+\Gamma^2_{12}X_v+M\ol{X},
\ee
\be\label{eqm13}
\nabla_{X_v}X_v=\Gamma^1_{22}X_u+\Gamma^2_{22}X_v+N\ol{X},
\ee
where $\Gamma^k_{ij}=\frac{1}{2}g^{kl}\left(\partial_ig_{jl}+\partial_jg_{il}-\partial_lg_{ij}\right)$
with $\partial_1=\partial_u$ and $\partial_2=\partial_v.$
The Weingarten equations take the form:
\be\label{eqm14}
-\ol{X}_u=L^1_1X_u+L^2_1X_v,
\ee
\be\label{eqm15}
-\ol{X}_v=L^1_2X_u+L^2_2X_v,
\ee
where $\{L^i_j\}$ are expressions involving $L, M, N$ and $E, F, G.$
The Mainardi-Codazzi equations take the form:
\be\label{eqm16}
L_v-M_u=L\Gamma^1_{12}+M\left(\Gamma^2_{12}-\Gamma^1_{11}\right)-N\Gamma^2_{11},
\ee
\be\label{eqm17}
M_v-N_u=L\Gamma^1_{22}+M\left(\Gamma^2_{22}-\Gamma^1_{12}\right)-N\Gamma^2_{12}.
\ee

Let $\rho=\frac{2}{1-|x|^2}.$ We consider the following function on $S^2,$
\[f=e^{\alpha\rho}H,\]
with $\alpha>0$ to be determined later. Without loss of generality, we assume there is only one singular point
$P_0\in S^2$ such that $\lim_{\epsilon\rightarrow 0}K^\epsilon(P_0)=-1.$
Let $\delta=\inf_{S^2/B_r(P_0)}K>-1$ and be independent of $\epsilon.$ Then, by a Theorem in \cite{Po64} (page 216),
we have
\be\label{eqm18}H\leq C_0,\,\mbox{for $\forall P\in S^2\backslash B_r(P_0)$},
\ee
where $C_0$ depends only upon the metric of $S^2\backslash B_r(P_0)$
and the metric of the space.

Next, we will focus on estimating the curvature inside $B_r(P_0).$
Since we use the ball model for hyperbolic space, we can always choose our origin very close to
$X(P_0)$ such that for any point $P\in X(B_r(P_0)\cap S^2)$
we have $|P|_{\mathbb{R}^3}^2=p_1^2+p_2^2+p_3^2\leq\frac{1}{100}.$
(We always assume $r$ is small.)
Restricting the function $f$ on $X(B_r(P_0)\cap S^2),$ we can see that
if $M=\max_{P\in X(B_r(P_0)\cap S^2)}f$ is achieved on the boundary, then we would have
\[e^{\alpha\rho}H\leq e^{3\alpha}C_0\;\mbox{for $\forall P\in X(B_r(P_0)\cap S^2)$}.\]

Therefore, we assume M is achieved at an interior point $Q.$ Let's write the metric
$g=g^\epsilon$ near $Q$ in conformal coordinates:
\[g=e^{2h}(du^2+dv^2),\]
where $(u, v)=(0, 0)$ corresponds to $Q,$ and
\[h=\partial_uh=\partial_vh=0\,\mbox{at $(0, 0)$}.\]
The intrinsic Gauss and mean curvatures become
\be\label{eqm19}
K=-1+\frac{LN-M^2}{e^{4h}},
\ee
\be\label{eqm20}
H=\frac{L+N}{2e^{2h}},
\ee
\be\label{eqm21}
K=-\frac{\tilde{\triangle}h}{e^{2h}},
\ee
where $\tilde{\triangle}=\partial_{11}+\partial_{22},$
also, the Mainardi-Codazzi equations
\be\label{eqm22}
L_2-M_1=h_2(L+N)=2He^{2h}h_2,
\ee
\be\label{eqm23}
M_2-N_1=-h_1(L+N)=-2He^{2h}h_1.
\ee
Clearly $\triangle_gK=e^{-2h}\tilde{\triangle}K.$
Differentiating \eqref{eqm19}, we have
\be\label{eqm24}
K_1=\frac{1}{e^{4h}}\left(L_1N+LN_1-2MM_1\right)-4h_1\left(K+1\right),
\ee
\be\label{eqm25}
\begin{aligned}
K_{11}&=\frac{1}{e^{4h}}(L_{11}N+LN_{11}+2N_1L_1-2M_1^2-2MM_{11})\\
&-8h_1K_1-4h_{11}(K+1)-16h_1^2(K+1),
\end{aligned}
\ee
\be\label{eqm26}
K_2=\frac{1}{e^{4h}}(L_2N+LN_2-2MM_2)-4h_2(K+1)
\ee
\be\label{eqm27}
\begin{aligned}
K_{22}&=\frac{1}{e^{4h}}(L_{22}N+LN_{22}+2N_2L_2-2M_2^2-2MM_{22})\\
&-8h_2K_2-4h_{22}(K+1)-16h_2^2(K+1).
\end{aligned}
\ee
Apply $\partial_2$ to \eqref{eqm22}, $\partial_1$ to \eqref{eqm23}, and add together
\be\label{eqm28}
(-L_2+2He^{2h}h_2)_2=(-N_1+2He^{2h}h_1)_1
\ee
which yields
\be\label{eqm29}
N_{11}=L_{22}+2H_1h_1e^{2h}-2H_2h_2e^{2h}+2(h_{11}-h_{22})He^{2h}+4(h_1^2-h_2^2)He^{2h}.
\ee
Differentiating \eqref{eqm20} gives
\be\label{eqm30}
2H_1=\frac{L_1+N_1}{e^{2h}}-4h_1H,
\ee
\be\label{eqm31}
2H_{11}=\frac{L_{11}+N_{11}}{e^{2h}}-8h_1H_1-8h_1^2H-4h_{11}H,
\ee
\be\label{eqm32}
2H_2=\frac{L_2+N_2}{e^{2h}}-4h_2H,
\ee
\be\label{eqm33}
2H_{12}=\frac{L_{12}+N_{12}}{e^{2h}}-4h_2H_1-4h_{12}H-4h_1H_2-8h_1h_2H,
\ee
\be\label{eqm34}
2H_{22}=\frac{L_{22}+N_{22}}{e^{2h}}-8h_2H_2-8h_2^2H-4h_{22}H.
\ee
Applying $\partial_1$ to \eqref{eqm22} and $\partial_2$ to \eqref{eqm23}, we obtain
\be\label{eqm35}
M_{11}=L_{12}-2H_1e^{2h}h_2-2He^{2h}h_{12}-4Hh_1h_2e^{2h},
\ee
\be\label{eqm36}
M_{22}=N_{12}-2H_2e^{2h}h_1-2He^{2h}h_{12}-4Hh_1h_2e^{2h}.
\ee
So, we have
\be\label{eqm37}
\begin{aligned}
e^{6h}\triangle_gK&=e^{4h}\tilde{\triangle}K\\
&=N(L_{11}+L_{22})
+L(N_{11}+N_{22})-2M(M_{11}+M_{22})\\
&+2(L_1N_1+L_2N_2-M_1^2-M_2^2)\\
&-e^{4h}[8h_1K_1+8h_2K_2+4(h_{11}+h_{22})(K+1)+16(h_1^2+h_2^2)(K+1)].\\
\end{aligned}
\ee
Substitute \eqref{eqm29}, \eqref{eqm35}, and \eqref{eqm36} into \eqref{eqm37}
we get
\be\label{eqm38}
\begin{aligned}
e^{6h}\triangle_gK&=N[L_{11}+N_{11}-2H_1h_1e^{2h}+2H_2h_2e^{2h}-2(h_{11}-h_{22})He^{2h}\\
&-4(h_1^2-h_2^2)He^{2h}]\\
&+L[L_{22}+N_{22}+2H_1h_1e^{2h}-2H_2h_2e^{2h}+2(h_{11}-h_{22})He^{2h}\\
&+4(h_1^2-h_2^2)He^{2h}]\\
&-2M[L_{12}+N_{12}-2(H_1h_2+H_2h_1)e^{2h}-4He^{2h}h_{12}-8Hh_1h_2e^{2h}]\\
&+2(L_1N_1+L_2N_2-M_1^2-M_2^2)\\
&-e^{4h}[8h_1K_1+8h_2K_2+4(h_{11}+h_{22})(K+1)+16(h_1^2+h_2^2)(K+1)].\\
\end{aligned}
\ee
Plug in \eqref{eqm31}, \eqref{eqm33}, and \eqref{eqm34} we obtain
\be\label{eqm39}
\begin{aligned}
e^{6h}\triangle_gK&=N[2e^{2h}H_{11}+6H_1h_1e^{2h}+2H_2h_2e^{2h}+2(h_{11}+h_{22})He^{2h}\\
&+4(h_1^2+h_2^2)He^{2h}]\\
&+L[2e^{2h}H_{22}+6h_2H_2e^{2h}+2H_1h_1e^{2h}+2(h_{11}+h_{22})He^{2h}\\
&+4(h_1^2+h_2^2)He^{2h}]\\
&-2M[2e^{2h}H_{12}+2(H_1h_2+H_2h_1)e^{2h}]\\
&+2(L_1N_1+L_2N_2-M_1^2-M_2^2)\\
&-e^{4h}[8h_1K_1+8h_2K_2+4(h_{11}+h_{22})(K+1)+16(h_1^2+h_2^2)(K+1)].\\
\end{aligned}
\ee
Regrouping the terms and using \eqref{eqm21}, we get
\be\label{eqm40}
\begin{aligned}
e^{6h}\triangle_gK&=2e^{2h}(NH_{11}-2MH_{12}+LH_{22})
+2(L_1N_1+L_2N_2-M_1^2-M_2^2)\\
&+N[6H_1h_1e^{2h}+2H_2h_2e^{2h}+2(h_{11}+h_{22})He^{2h}+4(h_1^2+h_2^2)He^{2h}]\\
&+L[6H_2h_2e^{2h}+2H_1h_1e^{2h}+2(h_{11}+h_{22})He^{2h}+4(h_1^2+h_2^2)He^{2h}]\\
&-4M(H_1h_2+H_2h_1)e^{2h}\\
&-e^{4h}[8h_1K_1+8h_2K_2-4K(K+1)e^{-2h}+16(h_1^2+h_2^2)(K+1)].\\
\end{aligned}
\ee
From \eqref{eqm30} and \eqref{eqm32}, we derive that
\be\label{eqm41}
L_1N_1\leq\frac{1}{4}(L_1+N_1)^2=e^{4h}(H_1+2h_1H)^2,
\ee
\be\label{eqm42}
L_2N_2\leq\frac{1}{4}(L_2+N_2)^2=e^{4h}(H_2+2h_2H)^2.
\ee
By \eqref{eqm20}, \eqref{eqm21}, \eqref{eqm41}, \eqref{eqm42} and that at point $Q,$ $h=h_1=h_2=0$ we have
\be\label{eqm43}
\triangle_gK\leq 2(NH_{11}-2MH_{12}+LH_{22})+2(H_1^2+H_2^2)-4KH^2+4K(K+1).
\ee
By assumption, we have at point $Q$,
\be\label{eqm44}
f_i(Q)=0,\,\mbox{i=1,2}.
\ee
Thus
\be\label{eqm45}
f_i=\alpha e^{\alpha\rho}H\rho_i+e^{\alpha\rho}H_i=0
\ee
and
\be\label{eqm46}
f_{ij}=e^{\alpha\rho}(\alpha^2H\rho_i\rho_j+\alpha H_i\rho_j+\alpha H_j\rho_i+\alpha H\rho_{ij}+H_{ij}),
\ee
where $i,j=1,2.$
Therefore, the following hold at $Q$,
\be\label{eqm47}
H_1=-\alpha\rho_1H,\,\,\,H_2=-\alpha\rho_2H,
\ee
\be\label{eqm48}
H_{11}=-\alpha H\rho_{11}+\alpha^2\rho_1^2H+f_{11}e^{-\alpha\rho},
\ee
\be\label{eqm49}
H_{12}=-\alpha H\rho_{12}+\alpha^2\rho_1\rho_2H+f_{12}e^{-\alpha\rho},
\ee
\be\label{eqm50}
H_{22}=-\alpha H\rho_{22}+\alpha^2\rho_2^2H+f_{22}e^{-\alpha\rho}.
\ee

Since at $Q$, $f$ achieves a local maximum, we also have
\[\left\{f_{ij}(Q)\right\}\leq0,\,\,1\leq i, j\leq 2.\]
Thus
\be\label{eqm51}
\begin{aligned}
NH_{11}-2MH_{12}+LH_{22}&\leq H[L(-\alpha\rho_{22}+\alpha^2\rho_2^2)-2M(-\alpha\rho_{12}+\alpha^2\rho_1\rho_2)\\
&+N(-\alpha\rho_{11}+\alpha^2\rho_1^2)]
\end{aligned}
\ee
and
\be\label{eqm52}
\left(H_1^2+H_2^2\right)=H^2\alpha^2(\rho_1^2+\rho_2^2).
\ee
Combining \eqref{eqm43}, \eqref{eqm51}, and \eqref{eqm52} we obtain
\be\label{eqm53}
\begin{aligned}
\triangle_gK&\leq-2\alpha H(L\rho_{22}-2M\rho_{12}+N\rho_{11})+2\alpha^2 H(L+N)(\rho_1^2+\rho_2^2)\\
&+2H^2\alpha^2(\rho_1^2+\rho_2^2)+4H^2+4K(K+1)\\
&=-2\alpha H(L\rho_{22}-2M\rho_{12}+N\rho_{11})+6H^2\alpha^2(\rho_1^2+\rho_2^2)+4H^2+4K(K+1).\\
\end{aligned}
\ee

Now, let $\vec{r}=x_1\frac{\partial}{\partial x_1}+x_2\frac{\partial}{\partial x_2}+x_3\frac{\partial}{\partial x_3}.$
Then, we have
\be\label{eqm54}
\rho_u=\lll<\vec{r}, X_u\rrr>_{\mathbb{H}^3},\;\rho_v=\lll<\vec{r}, X_v\rrr>_{\mathbb{H}^3},
\ee
\be\label{eqm55}
\rho_u^2+\rho_v^2\leq \lll<\vec{r}, \vec{r}\rrr>_{\mathbb{H}^3},
\ee
\be\label{eqm56}
\rho_{uu}=\frac{1+|x|^2}{1-|x|^2}E+\bar{\Gamma}^1_{11}\lll<\frac{\partial}{\partial u}, \vec{r}\rrr>_{\mathbb{H}^3}
+\bar{\Gamma}^2_{11}\lll<\frac{\partial}{\partial v},\vec{r}\rrr>_{\mathbb{H}^3}+L\lll<\ol{X}, \vec{r}\rrr>_{\mathbb{H}^3},
\ee
\be\label{eqm57}
\rho_{uv}=\frac{1+|x|^2}{1-|x|^2}F+\bar{\Gamma}^1_{12}\lll<\frac{\partial}{\partial u}, \vec{r}\rrr>_{\mathbb{H}^3}
+\bar{\Gamma}^2_{12}\lll<\frac{\partial}{\partial v},\vec{r}\rrr>_{\mathbb{H}^3}+M\lll<\ol{X}, \vec{r}\rrr>_{\mathbb{H}^3},
\ee
and
\be\label{eqm58}
\rho_{vv}=\frac{1+|x|^2}{1-|x|^2}G+\bar{\Gamma}^1_{22}\lll<\frac{\partial}{\partial u}, \vec{r}\rrr>_{\mathbb{H}^3}
+\bar{\Gamma}^2_{22}\lll<\frac{\partial}{\partial v},\vec{r}\rrr>_{\mathbb{H}^3}+N\lll<\ol{X}, \vec{r}\rrr>_{\mathbb{H}^3}.
\ee
Therefore, at point $Q,$ we have
\be\label{eqm59}
\begin{aligned}
\Delta_gK&\leq-2\alpha H\lll(L\rho_{22}+N\rho_{11}\rrr)+6H^2\alpha^2\lll(\rho_1^2+\rho_2^2\rrr)+4H^2+4K(K+1)\\
&\leq-2\alpha H\lll[L\lll(\frac{1+|x|^2}{1-|x|^2}G+N\lll<\ol{X},\vec{r}\rrr>_{\mathbb{H}^3}\rrr)
+N\lll(\frac{1+|x|^2}{1-|x|^2}E+L\lll<\ol{X},\vec{r}\rrr>_{\mathbb{H}^3}\rrr)\rrr]\\
&+6\alpha^2 H^2\lll<\vec{r},\vec{r}\rrr>_{H^3}+4H^2+4K(K+1)\\
&\leq-4\alpha H^2\frac{1+|x|^2}{1-|x|^2}+6\alpha^2 H^2\frac{4|x|^2}{\lll(1-|x|^2\rrr)^2}+4H^2+4K(K+1)\\
&=4\lll(1-\alpha\frac{1+|x|^2}{1-|x|^2}+6\frac{\alpha^2|x|^2}{\lll(1-|x|^2\rrr)^2}\rrr)H^2+C(K).\\
\end{aligned}
\ee
Now choose $\alpha=2,$ by our assumption, at $Q,$ $|x|^2<\frac{1}{100}.$ Hence, we get
\be\label{eqm60}
\Delta_gK\leq -H^2+C(K),
\ee
which implies
\be\label{eqm61}
H^2\leq C(K)-\Delta_gK.
\ee
Therefore,
\be\label{eqm62}
\max_{\overline{X(B_r(P_0)\cap S^2)}}f\leq e^{2\rho(Q)}H(Q)\leq C,
\ee
from which we conclude,
\[\max_{S^2}H\leq C\]
where C is independent of $\epsilon.$
This completes the proof.
\end{proof}

\begin{remark}
From equation \eqref{eqm59}, we can see that if $|x|^2<\frac{\alpha-1}{6\alpha^2+\alpha-1}$
for some $\alpha>1$ (i.e. $\diam (X^\epsilon)$ is small in the hyperbolic space),
then the existence of a $C^{1,1}$ embedding is true as long as $K\geq -1$ on $S^2.$
\end{remark}

\bigskip

\section{Different models of hyperbolic space}\label{dmh}
\subsection{Ball model and upper half-space model}\label{dmhb}

Some times it's easier to do calculations in the upper half-space model instead of the ball model.
Therefore, we can use the transformation function between the coordinates:\\

\includegraphics[scale=0.5]{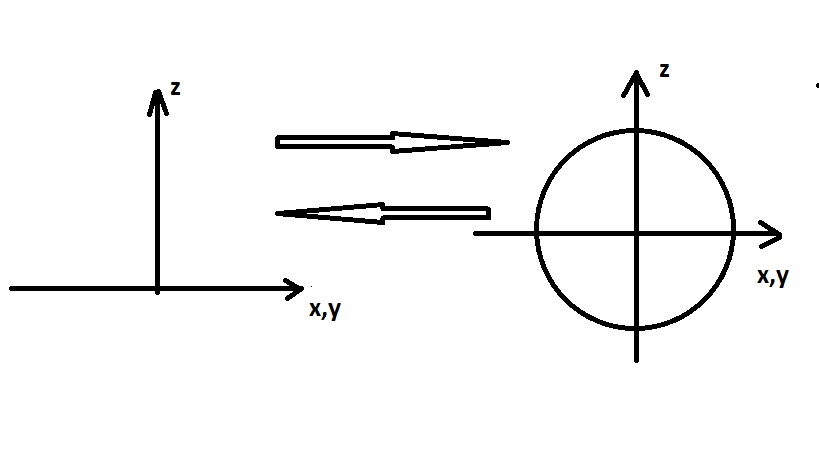}

\[\phi(x,y,z)=\frac{1}{x^2+y^2+(z+1)^2}(2x,2y,x^2+y^2+z^2-1)\]
\[\phi^{-1}(x,y,z)=\frac{1}{x^2+y^2+(z-1)^2}(2x,2y,1-(x^2+y^2+z^2))\]

In the next section, we are going to get estimates around singular points. So it will be helpful if we can
transform coordinates of a neighborhood of the singular point such that it can be represented as a
graph in the upper half-space model.

For any point $P$ belonging to our submanifold, we want to make a neighborhood of $P$
to be graphical over the $xy$-plane in the upper half-space model. We can achieve this by the following procedure:

Step1. Use the inward normal vector $N$ of $P$, we can get a geodesic $\gamma(t)$ starting from $P$
with tangent $N$;

Step2. Rotate $P$ such that $N$ is parallel to the $\frac{\partial}{\partial z}$ direction.
Then choose some $d>0$, such that $\gamma(d)$ is the origin, and $\gamma([0,d])$ lies on the
z-axis in the unit ball coordinates.

Step3. Use the transformation $\phi^{-1}$ above to get $P\in z-$axis in the upper half-space coordinates,
and the normal $N$ is parallel to $\frac{\partial}{\partial z}.$ Therefore, there is a neighborhood of $P$ which
can be written as the graph of a function $z=u(x, y)$ over the $xy$-plane. Moreover, $\nabla z=0$ at $(0, 0).$

Note that the above procedure is true for n-dimensions.

\subsection{Vertical graph in upper half-space model}\label{dmhf}
We will use the half-space model
\[\mathbb{H}^{n+1}=\{(x, x_{n+1})\in \mathbb{R}^{n+1}:x_{n+1}>0\}\]
equipped with the hyperbolic metric
\be\label{dmhf1}
ds^2=\frac{\sum_{i+1}^{n+1}dx_i^2}{x_{n+1}^2}.
\ee

If $\Sigma$ is the graph of a function $u(x)$
and $x\in\Omega\subset R^n\times\{0\},$
\[\Sigma=\{(x, x_{n+1}):x\in\Omega, x_{n+1}=u(x)\},\]
then the coordinate vector fields and downward unit normal are given by
\[X_i=e_i+u_ie_{n+1},\;\mathbf{n} =u\nu=u\frac{u_ie_i-e_{n+1}}{w},\]
where $w=\sqrt{1+|\nabla u|^2}$ and $\nu$ is the Euclidean downward unit normal to $\Sigma.$
The first fundamental form $g_{ij}$ is then given by
\be\label{dmhf2}
g_{ij}=\lll<X_i, X_j\rrr>_{\mathbb{H}^{n+1}}=\frac{1}{u^2}(\delta_{ij}+u_iu_j)=\frac{g^e_{ij}}{u^2}.
\ee
To compute the second fundamental form $h_{ij},$ we use
\be\label{dmhf3}
\Gamma^k_{ij}=\frac{1}{x_{n+1}}(-\delta_{jk}\delta_{in+1}-\delta_{ik}\delta_{jn+1}+\delta_{ij}\delta_{kn+1})
\ee
to obtain
\be\label{dmhf4}
\nabla_{X_i}X_j=\lll(\frac{\delta_{ij}}{x_{n+1}}+u_{ij}-\frac{u_iu_j}{x_{n+1}}\rrr)e_{n+1}
-\frac{u_je_i+u_ie_j}{x_{n+1}}.
\ee
Then
\be\label{dmhf5}
h_{ij}=\lll<\nabla_{X_i}X_j, u\nu\rrr>_{\mathbb{H}^{n+1}}=\frac{-1}{u^2w}(\delta_{ij}+u_iu_j+uu_{ij}).
\ee
\bigskip

\section{Proof of Theorem 1.2}\label{pot}
\subsection{Partial third derivative estimates}\label{ptd}
In this subsection, we will establish the following lemma. The argument is
based on the argument originally given by E. Calabi \cite{Ca58}, see also \cite{CNS84} and \cite{Ia92}.
Note that in this section, for convenience, we denote the extrinsic Gauss curvature by $K$.
\label{ptd}
\begin{lemma}
\label{ptdl1}
In a neighborhood of $(0,0),$
\be\label{ptd1}
L\sigma\geq\frac{\sigma^2}{2}-\frac{C}{K^4}
\ee
holds, where
\be\label{ptd2}
L\sigma=\rho^{ij}\sigma_{ij}+\frac{4u_i\sigma_i}{\lll(1+|\nabla u|^2\rrr)u},
\ee
\[\sigma=\rho^{kl}\rho^{pq}\rho^{rs}\rho_{kpr}\rho_{lqs},\]
\[\rho=-\frac{x^2+u^2}{2},\]
and $C$ is a constant depending on the maximum of $\frac{1}{w}$ near $(0,0),$
the maximum of $|K|_{C^2},$ the maximum of $K^{1/2}|\nabla^3 K|,$ the maximum of $|\nabla K|^2/K,$
and the maximum of mean curvature $H.$
\end{lemma}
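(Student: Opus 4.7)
The plan is a Calabi-type third-derivative identity, adapted to the graph representation in the upper half-space model. The key observation, immediate from \eqref{dmhf5}, is that the Euclidean Hessian of $\rho=-(x^2+u^2)/2$ satisfies $\rho_{ij}=-(\delta_{ij}+u_iu_j+uu_{ij})=u^2w\,h_{ij}$, so $\det\rho_{ij}$ differs from the extrinsic Gauss curvature $K$ only by a smooth factor depending on $u$ and $|\nabla u|$. Taking logarithms gives a Monge--Amp\`ere-type equation
\begin{equation*}
\log\det\rho_{ij}=\log K+\Phi(u,\nabla u),
\end{equation*}
where $\Phi$ is smooth and explicit. The gradient correction $\frac{4u_i\sigma_i}{(1+|\nabla u|^2)u}$ built into the definition of $L$ is precisely what is needed to absorb $\rho^{ij}\Phi_{ij}$ when the linearized operator is applied to $\sigma$.

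First I would differentiate the Monge--Amp\`ere equation once and twice to obtain the familiar identities
\begin{equation*}
\rho^{ij}\rho_{ijk}=(\log K)_k+\Phi_k,\qquad \rho^{ij}\rho_{ijkl}-\rho^{ip}\rho^{jq}\rho_{ijk}\rho_{pql}=(\log K)_{kl}+\Phi_{kl},
\end{equation*}
and then differentiate $\sigma=\rho^{kl}\rho^{pq}\rho^{rs}\rho_{kpr}\rho_{lqs}$ twice, using $(\rho^{ij})_k=-\rho^{ia}\rho^{jb}\rho_{abk}$. Contracting $\sigma_{ij}$ with $\rho^{ij}$ produces three kinds of terms: a fourth-order piece $\rho^{ij}\rho_{kprij}$, which by the twice-differentiated equation is rewritten as the Hessian of $\log K$ plus cubic-cubic remainders; a positive quartic form $Q$ in the third derivatives of $\rho$; and cubic-cubic cross terms coming from differentiating the inverse factors $\rho^{ij}$. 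The Codazzi equations on the hypersurface, which in $\mathbb{H}^3$ carry a constant curvature correction, translate into the statement that $\rho_{ijk}$ is symmetric in all three indices modulo smooth lower-order terms. This symmetry, combined with Cauchy--Schwarz applied to the three $\rho^{ij}$ contractions, yields the decisive pointwise inequality $Q\geq \sigma^2/2$.

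All remaining terms are then collected into the error. Each one is bounded using the assumed control on $1/w$, $|K|_{C^2}$, $K^{1/2}|\nabla^3 K|$, $|\nabla K|^2/K$, and $H$, together with the basic bound $|\rho^{ij}|\lesssim H/K$. Since inverse factors can appear up to four times in the worst remainder (three from the structure of $\sigma$ and one more generated by differentiating $\rho^{ij}$), the error scales like $C/K^4$, which is the claimed bound. The main obstacle is the algebraic step $Q\geq \sigma^2/2$: the Codazzi symmetrization introduces a cascade of lower-order commutator terms that must be shown to either cancel against terms generated by $\Phi$ and by derivatives of $\log K$, or to fit inside the $C/K^4$ envelope. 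Obtaining the constant $1/2$ exactly, rather than a smaller fraction that would then force a rescaling of $\sigma$, is delicate, and the coefficient $4$ in the gradient term of $L$ is tailored precisely to make this balance work, in the same spirit as \cite{Ia92}, \cite{CNS84} but with the hyperbolic corrections tracked throughout.
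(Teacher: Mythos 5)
Your proposal follows essentially the same route as the paper: Calabi's third-derivative estimate applied to the Monge--Amp\`ere equation $\log\det\rho_{ij}=\log K+2\log(1+|\nabla u|^2)$ for the graph in the half-space model, with the coefficient $4$ in $L$ chosen exactly to cancel the fourth derivatives $u_{ikpr}$ produced by the thrice-differentiated right-hand side, and the remaining errors absorbed via Young's inequality into $\sigma^2/2-C/K^4$. One correction: no Codazzi symmetrization is needed, since $\rho_{kpr}=\partial_k\partial_p\partial_r\rho$ is exactly symmetric (this is precisely why one works with $\rho$ rather than with $\nabla_k h_{ij}$), so the ``cascade of commutator terms'' you single out as the main obstacle does not arise; the genuine crux is instead the algebraic decomposition of the cubic--cubic terms into the two quartic quantities $A$ and $B$ built from the normalized third derivatives $\rho_{kpr}/(\rho_{kk}\rho_{pp}\rho_{rr})^{1/2}$, together with the two facts $2B\geq\sigma^2$ (Cauchy--Schwarz) and $B\geq A$ (a sum-of-squares identity), which your sketch does not isolate.
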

\begin{remark}\label{ptdr1}
The reason for the presence of the term $|\nabla K|^2/K$ instead of $1/K$
is that for a large class of functions satisfying
$K(0,0)=0$ and $K(Q)\neq 0$ when $Q\neq (0,0),$ the quantity $|\nabla K|^2/K$ is bounded while $1/K$ is not.
\end{remark}
\begin{proof}
In the following let $\rho=-\frac{x^2+u^2}{2}$. By \eqref{dmhf5}
we have
\be\label{ptd3}
u^2wh_{ij}=\rho_{ij},\;\mbox{where $w=\sqrt{1+|\nabla u|^2}.$}
\ee
Hence,
\be\label{ptd4}
\frac{\det\rho_{ij}}{\det g}=\frac{u^4w^2\det{h_{ij}}}{\det g}=u^4w^2K.
\ee
Since $g_{ij}=\frac{1}{u^2}\lll(\delta_{ij}+u_iu_j\rrr),$
\[\det g=\frac{1}{u^4}\left| \begin{array}{cc} 1+u_1^2 & u_1u_2 \\ u_2u_1 & 1+u_2^2 \end{array} \right|=\frac{w^2}{u^4},\]
\be\label{ptd5}
\det\rho_{ij}=w^4K=K\lll(1+|\nabla u|^2\rrr)^2.
\ee
Denote $m=K\lll(1+|\nabla u|^2\rrr)^2,$ then we have
\[\log\lll(\det\rho_{ij}\rrr)=\log m.\]
Differentiating three times gives
\be\label{ptd6}
\rho^{ij}\rho_{ijk}=\lll(\log m\rrr)_k,
\ee
\be\label{ptd7}
\rho^{ij}\rho_{ijkp}=\rho^{ia}\rho_{abp}\rho^{bj}\rho_{ijk}+\lll(\log m\rrr)_{kp},
\ee
\be\label{ptd8}
\begin{aligned}
\rho^{ij}\rho_{ijkpr}&=\rho^{ia}\rho_{abr}\rho^{bj}\rho_{ijkp}+\rho^{ia}\rho_{abpr}\rho^{bj}\rho_{ijk}
+\rho^{ia}\rho_{abp}\rho^{bj}\rho_{ijkr}\\
&-2\rho^{ic}\rho_{cdr}\rho^{da}\rho_{abp}\rho^{bj}\rho_{ijk}+\lll(\log m\rrr)_{kpr}\\
\end{aligned}
\ee
Next, consider
\be\label{ptd9}
\sigma=\rho^{kl}\rho^{pq}\rho^{rs}\rho_{kpr}\rho_{lqs},
\ee
Differentiating it with respect to $x_i$ we get,
\be\label{ptd10}
\begin{aligned}
\sigma_i&=\lll(-\rho^{ka}\rho_{abi}\rho^{bl}\rho^{pq}\rho^{rs}-\rho^{kl}\rho^{pa}\rho_{abi}\rho^{bq}\rho^{rs}\right.
\left.-\rho^{kl}\rho^{pq}\rho^{ra}\rho_{abi}\rho^{bs}\rrr)\rho_{kpr}\rho_{lqs}\\
&+2\rho^{kl}\rho^{pq}\rho^{rs}\rho_{kpri}\rho_{lqs}.
\end{aligned}
\ee
Now fix a point $P$ and rotate about $P$ so that $\{\rho_{ij}\}$
is diagonal at $P.$ Thus, in these coordinates at the point $P,$ we have
\be\label{ptd11}
\begin{aligned}
\rho^{ij}\sigma_{ij}&=2\frac{\rho_{kpr}\rho_{kprii}}{\rho_{ii}\rho_{kk}\rho_{pp}\rho_{rr}}+2\frac{\rho_{kpri}^2}{\rho_{ii}\rho_{kk}\rho_{pp}\rho_{rr}}
-8\frac{\rho_{kli}\rho_{lpr}\rho_{kpri}}{\rho_{ii}\rho_{kk}\rho_{ll}\rho_{pp}\rho_{rr}}\\
&-4\frac{\rho_{rsi}\rho_{kps}\rho_{kpri}}{\rho_{ii}\rho_{kk}\rho_{pp}\rho_{rr}\rho_{ss}}
-2\frac{\rho_{kpr}\rho_{lpr}\rho_{klii}}{\rho_{ii}\rho_{kk}\rho_{ll}\rho_{pp}\rho_{rr}}
-\frac{\rho_{kpr}\rho_{kps}\rho_{rsii}}{\rho_{ii}\rho_{kk}\rho_{pp}\rho_{rr}\rho_{ss}}\\
&+4\frac{\rho_{ali}\rho_{kai}\rho_{kpr}\rho_{lpr}}{\rho_{aa}\rho_{ii}\rho_{kk}\rho_{ll}\rho_{pp}\rho_{rr}}
+2\frac{\rho_{rai}\rho_{asi}\rho_{kpr}\rho_{kps}}{\rho_{aa}\rho_{ii}\rho_{kk}\rho_{pp}\rho_{rr}\rho_{ss}}\\
&+2\frac{\rho_{kli}\rho_{pqi}\rho_{kpr}\rho_{lqr}}{\rho_{ii}\rho_{kk}\rho_{ll}\rho_{pp}\rho_{qq}\rho_{rr}}
+4\frac{\rho_{kli}\rho_{rsi}\rho_{kpr}\rho_{lps}}{\rho_{ii}\rho_{kk}\rho_{ll}\rho_{pp}\rho_{rr}\rho_{ss}}.
\end{aligned}
\ee
Consider the first term in \eqref{ptd11} and use \eqref{ptd8} to get
\be\label{ptd12}
\begin{aligned}
2\frac{\rho_{kpr}\rho_{kprii}}{\rho_{ii}\rho_{kk}\rho_{pp}\rho_{rr}}&=
2\frac{\rho_{kpr}}{\rho_{kk}\rho_{pp}\rho_{rr}}\frac{\rho_{iikpr}}{\rho_{ii}}\\
&=2\frac{\rho_{kpr}}{\rho_{kk}\rho_{pp}\rho_{rr}}\lll[\frac{\rho_{abr}\rho_{abkp}}{\rho_{aa}\rho_{bb}}\right.
\left.\frac{\rho_{abpr}\rho_{abk}}{\rho_{aa}\rho_{bb}}+\frac{\rho_{abp}\rho_{abkr}}{\rho_{aa}\rho_{bb}}\right.\\
&\left.-2\frac{\rho_{cdr}\rho_{dbp}\rho_{cbk}}{\rho_{cc}\rho_{dd}\rho_{bb}}+\lll(\log m\rrr)_{kpr}\right].\\
\end{aligned}
\ee
Thus,
\be\label{ptd13}
\begin{aligned}
\rho^{ij}\sigma_{ij}&=2\frac{\rho_{kpr}\rho_{abr}\rho_{abkp}}{\rho_{aa}\rho_{bb}\rho_{kk}\rho_{pp}\rho_{rr}}
+2\frac{\rho_{abk}\rho_{kpr}\rho_{abpr}}{\rho_{aa}\rho_{bb}\rho_{kk}\rho_{pp}\rho_{rr}}
+2\frac{\rho_{abp}\rho_{kpr}\rho_{abkr}}{\rho_{aa}\rho_{bb}\rho_{kk}\rho_{pp}\rho_{rr}}\\
&-4\frac{\rho_{cdr}\rho_{dbp}\rho_{cbk}\rho_{kpr}}{\rho_{bb}\rho_{cc}\rho_{dd}\rho_{kk}\rho_{pp}\rho_{rr}}
+2\frac{\rho^2_{kpri}}{\rho_{ii}\rho_{kk}\rho_{pp}\rho_{rr}}
-8\frac{\rho_{kli}\rho_{lpr}\rho_{kpri}}{\rho_{ii}\rho_{kk}\rho_{ll}\rho_{pp}\rho_{rr}}\\
&-4\frac{\rho_{rsi}\rho_{kps}\rho_{kpri}}{\rho_{ii}\rho_{kk}\rho_{pp}\rho_{rr}\rho_{ss}}
-2\frac{\rho_{kpr}\rho_{lpr}\rho_{klii}}{\rho_{ii}\rho_{kk}\rho_{ll}\rho_{pp}\rho_{rr}}
-\frac{\rho_{kpr}\rho_{kps}\rho_{rsii}}{\rho_{ii}\rho_{kk}\rho_{pp}\rho_{rr}\rho_{ss}}\\
&+4\frac{\rho_{ali}\rho_{kai}\rho_{kpr}\rho_{lpr}}{\rho_{aa}\rho_{ii}\rho_{kk}\rho_{ll}\rho_{pp}\rho_{rr}}
+2\frac{\rho_{rai}\rho_{asi}\rho_{kpr}\rho_{kps}}{\rho_{aa}\rho_{ii}\rho_{kk}\rho_{pp}\rho_{rr}\rho_{ss}}\\
&+2\frac{\rho_{kli}\rho_{pqi}\rho_{kpr}\rho_{lqr}}{\rho_{ii}\rho_{kk}\rho_{ll}\rho_{pp}\rho_{qq}\rho_{rr}}
+4\frac{\rho_{kli}\rho_{rsi}\rho_{kpr}\rho_{lps}}{\rho_{ii}\rho_{kk}\rho_{ll}\rho_{pp}\rho_{rr}\rho_{ss}}
+2\frac{\rho_{kpr}\lll(\log m\rrr)_{kpr}}{\rho_{kk}\rho_{pp}\rho_{rr}}.
\end{aligned}
\ee
Let
\[A=\frac{\rho_{kpr}\rho_{lqr}\rho_{kli}\rho_{pqi}}{\rho_{ii}\rho_{kk}\rho_{pp}\rho_{qq}\rho_{rr}\rho_{ll}},\;\;
B=\frac{\rho_{ali}\rho_{kai}\rho_{kpr}\rho_{lpr}}{\rho_{aa}\rho_{ii}\rho_{kk}\rho_{ll}\rho_{pp}\rho_{rr}}.\]
We have
\be\label{ptd14}
\begin{aligned}
\rho^{ij}\sigma_{ij}&=\lll(2\frac{\rho_{kpr}\rho_{abr}\rho_{abkp}}{\rho_{aa}\rho_{bb}\rho_{kk}\rho_{pp}\rho_{rr}}\right.
\left.-8\frac{\rho_{kli}\rho_{lpr}\rho_{kpri}}{\rho_{ii}\rho_{kk}\rho_{ll}\rho_{pp}\rho_{rr}}\rrr)
+2\frac{\rho^2_{kpri}}{\rho_{ii}\rho_{kk}\rho_{pp}\rho_{rr}}\\
&+3B+2A+2\frac{\rho_{kpr}(\log m)_{kpr}}{\rho_{kk}\rho_{pp}\rho_{rr}}
-3\frac{\rho_{kpr}\rho_{lpr}}{\rho_{ll}\rho_{kk}\rho_{pp}\rho_{rr}}(\log m)_{kl}.
\end{aligned}
\ee
Thus,
\be\label{ptd15}
\begin{aligned}
\rho^{ij}\sigma_{ij}&=\frac{2}{\rho_{ii}\rho_{kk}\rho_{pp}\rho_{rr}}\lll(\rho^2_{kpri}\right.
\left.-3\frac{\rho_{kli}\rho_{lpr}\rho_{kpri}}{\rho_{ll}}\rrr)+3B+2A\\
&+2\frac{\rho_{kpr}(\log m)_{kpr}}{\rho_{kk}\rho_{pp}\rho_{rr}}
-3\frac{\rho_{kpr}\rho_{lpr}(\log m)_{kl}}{\rho_{ll}\rho_{kk}\rho_{pp}\rho_{rr}}\\
&=\frac{2}{\rho_{ii}\rho_{kk}\rho_{pp}\rho_{rr}}
\lll[\rho_{kpri}-\sum_l\frac{\lll(\rho_{kli}\rho_{lpr}+\rho_{pli}\rho_{klr}+\rho_{rli}\rho_{lpk}\rrr)}{2\rho_{ll}}\rrr]^2\\
&-\frac{1}{2\rho_{ii}\rho_{kk}\rho_{pp}\rho_{rr}}
\lll[\sum_l\frac{\lll(\rho_{kli}\rho_{lpr}+\rho_{pli}\rho_{klr}+\rho_{rli}\rho_{lpk}\rrr)}{\rho_{ll}}\rrr]^2\\
&+3B+2A+2\frac{\rho_{kpr}(\log m)_{kpr}}{\rho_{kk}\rho_{pp}\rho_{rr}}
-3\frac{\rho_{kpr}\rho_{lpr}(\log m)_{kl}}{\rho_{ll}\rho_{kk}\rho_{pp}\rho_{rr}}\\
&\geq 2B+(B-A)+2\frac{\rho_{kpr}(\log m)_{kpr}}{\rho_{kk}\rho_{pp}\rho_{rr}}
-3\frac{\rho_{kpr}\rho_{lpr}(\log m)_{kl}}{\rho_{ll}\rho_{kk}\rho_{pp}\rho_{rr}}.\\
\end{aligned}
\ee
Now, let $v_{kpr}=\rho_{kpr}/\lll(\rho_{kk}\rho_{pp}\rho_{rr}\rrr)^{1/2}.$ Thus, $B=v_{kpr}v_{lpr}v_{kai}v_{lai}$
and $A=v_{kpr}v_{lqr}v_{kli}v_{pqi}.$ Observe that
\be\label{ptd16}
\begin{aligned}
\frac{1}{2}\sigma^2&=\frac{1}{2}\lll(\sum_{kpr}v_{kpr}^2\rrr)^2\leq\sum_k\lll(\sum_{pr}v^2_{kpr}\rrr)^2\\
&\leq\sum_{kl}\lll(\sum_{pr}v_{kpr}v_{lpr}\rrr)^2=v_{kpr}v_{lpr}v_{kai}v_{lai}=B.
\end{aligned}
\ee
Next consider $B-A,$
\be\label{ptd17}
\begin{aligned}
0&\leq\lll(\sum_rv_{jri}v_{rkl}+v_{jrk}v_{ril}-2v_{jrl}v_{rik}\rrr)^2\\
&=\lll(v_{jri}v_{rkl}+v_{jrk}v_{ril}-2v_{jrl}v_{rik}\rrr)\lll(v_{jqi}v_{qkl}+v_{jqk}v_{qil}-2v_{jql}v_{qik}\rrr)\\
&=B-A.
\end{aligned}
\ee
Therefore,
\[\rho^{ij}\sigma_{ij}\geq\sigma^2+2\frac{\rho_{kpr}(\log m)_{kpr}}{\rho_{kk}\rho_{pp}\rho_{rr}}
-3\frac{\rho_{kpr}\rho_{lpr}(\log m)_{kl}}{\rho_{kk}\rho_{pp}\rho_{rr}\rho_{ll}}.\]
Recall that $m=K\lll(1+|\nabla u|^2\rrr)^2.$ We have
\be\label{ptd18}
\begin{aligned}
\rho^{ij}\sigma_{ij}&\geq\sigma^2+2\frac{\rho_{kpr}}{\rho_{kk}\rho_{pp}\rho_{rr}}
\lll[\log K+2\log \lll(1+|\nabla u|^2\rrr)\rrr]_{kpr}\\
&-3\frac{\rho_{kpr}\rho_{lpr}}{\rho_{kk}\rho_{pp}\rho_{rr}\rho_{ll}}
\lll[\log K+2\log\lll(1+|\nabla u|^2\rrr)\rrr]_{kl}\\
&=\sigma^2+2\frac{\rho_{kpr}}{\rho_{kk}\rho_{pp}\rho_{rr}}\lll(\log K\rrr)_{kpr}
+4\frac{\rho_{kpr}}{\rho_{kk}\rho_{pp}\rho_{rr}}\lll[\log\lll(1+|\nabla u|^2\rrr)\rrr]_{kpr}\\
&-3\frac{\rho_{kpr}\rho_{lpr}}{\rho_{kk}\rho_{pp}\rho_{rr}\rho_{ll}}\lll(\log K\rrr)_{kl}
-6\frac{\rho_{kpr}\rho_{lpr}}{\rho_{kk}\rho_{pp}\rho_{rr}\rho_{ll}}\lll[\log\lll(1+|\nabla u|^2\rrr)\rrr]_{kl}\\
&=\sigma^2+2\frac{\rho_{kpr}}{\rho_{kk}\rho_{pp}\rho_{rr}}
\lll(\frac{K_{kpr}}{K}-\frac{K_{kp}K_{r}+K_{kr}K_p+K_{pr}K_k}{K^2}+2\frac{K_kK_pK_r}{K^3}\rrr)\\
&-3\frac{\rho_{kpr}\rho_{lpr}}{\rho_{kk}\rho_{pp}\rho_{rr}\rho_{ll}}\lll(\frac{K_{kl}}{K}-\frac{K_kK_l}{K^2}\rrr)
+8\frac{\rho_{kpr}u_iu_{ikpr}}{\rho_{kk}\rho_{pp}\rho_{rr}\lll(1+|\nabla u|^2\rrr)}+S_1,
\end{aligned}
\ee
where
\be\label{ptd19}
\begin{aligned}
S_1&=4\frac{\rho_{kpr}}{\rho_{kk}\rho_{pp}\rho_{rr}}
\lll\{2\frac{(u_{ipr}u_{ik}+u_{ip}u_{ikr}+u_{ir}u_{ikp})}{1+|\nabla u|^2}\right.\\
&-4\frac{u_qu_{qr}(u_{ip}u_{ik}+u_iu_{ikp})}{\lll(1+|\nabla u|^2\rrr)^2}
-4\frac{u_lu_{lp}(u_{ir}u_{ik}+u_iu_{ikr})}{\lll(1+|\nabla u|^2\rrr)^2}\\
&-4\frac{u_iu_{ik}(u_{lr}u_{lp}+u_lu_{lpr})}{\lll(1+|\nabla u|^2\rrr)^2}
\left.+8\frac{u_{ik}u_{lp}u_{qr}u_iu_lu_q}{\lll(1+|\nabla u|^2\rrr)^3}\rrr\}\\
&-6\frac{\rho_{kpr}\rho_{lpr}}{\rho_{kk}\rho_{pp}\rho_{rr}\rho_{ll}}
\lll\{2\frac{u_{il}u_{ik}+u_iu_{ikl}}{1+|\nabla u|^2}\right.
\left.-4\frac{u_iu_{ik}u_qu_{ql}}{\lll(1+|\nabla u|^2\rrr)^2}\rrr\}.
\end{aligned}
\ee
Note that at the point $P,$ we have
\be\label{ptd20}
\sigma_i=2\frac{\rho_{ikpr}\rho_{kpr}}{\rho_{kk}\rho_{pp}\rho_{rr}}
-3\frac{\rho_{kli}\rho_{kpr}\rho_{lpr}}{\rho_{kk}\rho_{pp}\rho_{rr}\rho_{ll}},
\ee
\be\label{ptd21}
\rho_{pr}=-(uu_{pr}+u_pu_r+\delta_{pr}),
\ee
\be\label{ptd22}
\rho_{kpr}=-(u_ku_{pr}+uu_{kpr}+u_{kp}u_r+u_pu_{kr}),
\ee
\be\label{ptd23}
\begin{aligned}
\rho_{ikpr}&=-(u_{ik}u_{pr}+u_ku_{ipr}+u_iu_{kpr}+uu_{ikpr}\\
&+u_{ikp}u_r+u_{kp}u_{ir}+u_{ip}u_{kr}+u_pu_{ikr}),\\
\end{aligned}
\ee
and
\be\label{ptd24}
4\frac{u_i\sigma_i}{\lll(1+|\nabla u|^2\rrr)u}=
-8\frac{u_iu_{ikpr}}{\lll(1+|\nabla u|^2\rrr)}\frac{\rho_{kpr}}{\rho_{kk}\rho_{pp}\rho_{rr}}+S_2,
\ee
where
\be\label{ptd25}
\begin{aligned}
S_2&=4\frac{u_i\sigma_i}{\lll(1+|\nabla u|^2\rrr)u}+8\frac{u_iu_{ikpr}\rho_{kpr}}{\rho_{kk}\rho_{pp}\rho_{rr}\lll(1+|\nabla u|^2\rrr)}\\
&=4\frac{u_i}{\lll(1+|\nabla u|^2\rrr)u}\lll\{-2\frac{\rho_{kpr}}{\rho_{kk}\rho_{pp}\rho_{rr}}\right.
\lll[u_{ik}u_{pr}+u_ku_{ipr}+u_iu_{kpr}+u_{r}u_{ikp}\right.\\
&\left.+u_{kp}u_{ir}+u_{ip}u_{kr}+u_pu_{ikr}\rrr]
\left.-3\frac{\rho_{kli}\rho_{kpr}\rho_{lpr}}{\rho_{kk}\rho_{pp}\rho_{rr}\rho_{ll}}\right\}.
\end{aligned}
\ee
Therefore,
\be\label{ptd26}
\begin{aligned}
L\sigma&=\rho^{ij}\sigma_{ij}+4\frac{u_i\sigma_i}{\lll(1+|\nabla u|^2\rrr)u}\\
&\geq\sigma^2+2\frac{\rho_{kpr}}{\rho_{kk}\rho_{pp}\rho_{rr}}\lll(\frac{K_{kpr}}{K}\right.
\left.-\frac{K_{kp}K_r+K_{kr}K_p+K_kK_{pr}}{K^2}+2\frac{K_kK_pK_r}{K^3}\rrr)\\
&-3\frac{\rho_{kpr}\rho_{lpr}}{\rho_{kk}\rho_{pp}\rho_{rr}\rho_{ll}}
\lll(\frac{K_{kl}}{K}-\frac{K_kK_l}{K^2}\rrr)+S_1+S_2.\\
\end{aligned}
\ee
Recall that a point $P$ near $(0,0, u(0,0))$ has been fixed, a rotation of coordinates
has been performed about $P$ so that $\{\rho_{ij}\}$ is diagonal at $P.$ Consequently,
$\{h_{ij}\}$ is diagonal at $P$ as well. Thus,
\be\label{ptd27}
\rho_{11}\rho_{22}=w^4K=K\lll(1+|\nabla u|^2\rrr)^2,
\ee
and
\be\label{ptd28}
\rho_{11}+\rho_{22}=u^2w(h_{11}+h_{22})=2w\tilde{H}.
\ee
We denote $\tilde{H}=\frac{u^2}{2}(h_{11}+h_{22}).$

Since,
\be\label{ptd29}
\begin{aligned}
2H&=g^{11}h_{11}+g^{22}h_{22}\\
&=u^2\lll(1-\frac{u_1^2}{w^2}\rrr)h_{11}+u^2\lll(1-\frac{u_2^2}{w^2}\rrr)h_{22}\\
&=u^2(h_{11}+h_{22})-\frac{u_1^2}{w^2}u^2h_{11}-\frac{u^2_2}{w^2}u^2h_{22}\\
&\leq2\tilde{H},
\end{aligned}
\ee
we see that
\be\label{ptd30}
2H\geq\frac{2}{w^2}\tilde{H}.
\ee
We have
\be\label{ptd31}
0<\rho_{11},\;\rho_{22}\leq2w\tilde{H}\leq2w^3H.
\ee
Thus,
\be\label{ptd32}
0<\frac{1}{\rho_{ii}}=\frac{\rho_{3-i3-i}}{\rho_{ii}\rho_{3-i3-i}}
\leq\frac{2w^3H}{w^4K}=\frac{2H}{KW}.
\ee
Also, by assumption, $|\nabla u|(0,0)=0,$ so near $(0,0),$
$|\nabla u|<1.$ In the above coordinates at $P,$
\[|\nabla u|^2(P)=u_1^2+u_2^2.\]
So, we have
$|u_1|<1,$ $|u_2|<1.$ We may also assume $u\geq 1$ near $(0,0).$
Moreover, at $P,$
\[\sigma=\sum_{kpr}\frac{\rho^2_{kpr}}{\rho_{kk}\rho_{pp}\rho_{rr}}.\]
Thus for each fixed $k,p,r,$ we have
\[\frac{\rho^2_{kpr}}{\rho_{kk}\rho_{pp}\rho_{rr}}\leq\sigma,\]
and
\[\frac{|\rho_{kpr}|}{\sqrt{\rho_{kk}\rho_{pp}\rho_{rr}}}\leq\sigma^{1/2}.\]
Now, at $P,$
\[|\nabla K|^2=K_1^2+K_2^2,\]
\[|\nabla^2K|^2=K_{11}^2+2K_{12}^2+K_{22}^2,\]
\[|\nabla^3K|^2=\sum_{kpr}K^2_{kpr}.\]
Therefore, from \eqref{ptd26},
\be\label{ptd33}
\begin{aligned}
&\lll|\sum_{kpr}\frac{2\rho_{kpr}}{\rho_{kk}\rho_{pp}\rho_{rr}}\right.
\lll(\frac{K_{kpr}}{K}-\frac{(K_{kp}K_r+K_{kr}K_p+K_{pr}K_k)}{K^2}\right.
\left.\left.+\frac{2K_kK_pK_r}{K^3}\rrr)\rrr|\\
&\leq2\sum_{kpr}\frac{|\rho_{kpr}|}{\sqrt{\rho_{kk}\rho_{pp}\rho_{rr}}}\frac{1}{\sqrt{\rho_{kk}\rho_{pp}\rho_{rr}}}
\lll(\frac{|\nabla^3K|}{K}+\frac{3|\nabla^2K||\nabla K|}{K^2}+\frac{2|\nabla K|^3}{K^3}\rrr)\\
&\leq2\sigma^{1/2}\lll(\frac{|\nabla^3K|}{K}+\frac{3|\nabla^2K||\nabla K|}{K^2}\right.
\left.+\frac{2|\nabla K|^3}{K^3}\rrr)\sum_{kpr}\frac{1}{\sqrt{\rho_{kk}\rho_{pp}\rho_{rr}}}\\
&\leq2\sigma^{1/2}\lll(\frac{|\nabla^3K|}{K}+\frac{3|\nabla^2K||\nabla K|}{K^2}\right.
\left.+\frac{2|\nabla K|^3}{K^3}\rrr)\sum_{kpr}\frac{2^{3/2}H^{3/2}}{K^{3/2}w^{3/2}}\\
&=\frac{2^{11/2}H^{3/2}}{w^{3/2}}
\lll(K^{1/2}|\nabla^3K|+\frac{3|\nabla^2K||\nabla K|}{K^{1/2}}+\frac{2|\nabla K|^3}{K^{3/2}}\rrr)\frac{\sigma^{1/2}}{K^3}\\
&\leq\frac{C}{K^3}\sigma^{1/2},
\end{aligned}
\ee
where $C$ is a constant depending on the maximum of $\frac{1}{w}$ near $(0,0),$
$|K|_{C^2},$ $K^{1/2}|\nabla^3K|,$ $|\nabla K|/K^{1/2},$ and the maximum of $H.$
Next, still from \eqref{ptd26},
\be\label{ptd34}
\begin{aligned}
&\lll|-\frac{3\rho_{kpr}\rho_{lpr}}{\rho_{kk}\rho_{pp}\rho_{ll}\rho_{rr}}\right.
\left.\lll(\frac{K_{kl}}{K}-\frac{K_kK_l}{K^2}\rrr)\rrr|\\
&\leq\lll(\frac{|\nabla^2K|}{K}+\frac{|\nabla K|^2}{K^2}\rrr)
\sum_{kplr}\frac{3|\rho_{kpr}||\rho_{lpr}|}{\sqrt{\rho_{kk}\rho_{pp}\rho_{rr}}\sqrt{\rho_{rr}\rho_{pp}\rho_{ll}}}
\frac{1}{\sqrt{\rho_{kk}\rho_{ll}}}\\
&\leq3\sigma\lll(\frac{|\nabla^2K|}{K}+\frac{|\nabla K|^2}{K^2}\rrr)\times 2^5\frac{H}{Kw}\\
&\leq\frac{C}{K^2}\sigma.
\end{aligned}
\ee
Finally, estimates are to be obtained for $|S_1|$ and $|S_2|.$
From \eqref{ptd21}, \eqref{ptd22}, \eqref{ptd23}, and the assumption that $u\geq 1$ near $(0,0),$
we know at $P$ that
\[|uu_{pr}|\leq|\rho_{pr}|+2,\]
\be\label{ptd35}
|u_{pr}|\leq|\rho_{pr}|+2\leq2w^3H+2,
\ee
and
\[uu_{kpr}=-\rho_{kpr}-u_ku_{pr}-u_ru_{kp}-u_pu_{kr},\]
\be\label{ptd36}
\begin{aligned}
|u_{kpr}|&\leq|uu_{kpr}|\leq|\rho_{kpr}|+3\sum_{pr}\lll(|\rho_{pr}|+2\rrr)\\
&\leq|\rho_{kpr}|+24w^3H+24\\
&\leq\sigma^{1/2}\sqrt{\rho_{kk}\rho_{pp}\rho_{rr}}+24w^3+24\\
&\leq\sigma^{1/2}\times 2^{9/2}w^{9/2}H^{3/2}+24w^3H+24.\\
\end{aligned}
\ee
Now for $|S_1|$ we have,
\be\label{ptd37}
\begin{aligned}
|S_1|&\leq\frac{8|\rho_{kpr}|}{\sqrt{\rho_{kk}\rho_{pp}\rho_{rr}}}\frac{1}{\sqrt{\rho_{kk}\rho_{pp}\rho_{rr}}}
\sum_{ikpr}\lll(|u_{ipr}u_{ik}|+|u_{ip}u_{ikr}|+|u_{ir}u_{ikp}|\rrr)\\
&+\frac{48|\rho_{kpr}|}{\sqrt{\rho_{kk}\rho_{pp}\rho_{ll}}}\frac{1}{\sqrt{\rho_{kk}\rho_{pp}\rho_{rr}}}
\sum_{ikpqr}|u_qu_{qr}|\lll(|u_{ip}u_{ik}|+|u_iu_{ikp}|\rrr)\\
&+\frac{32|\rho_{kpr}|}{\sqrt{\rho_{kk}\rho_{pp}\rho_{rr}}}\frac{1}{\sqrt{\rho_{kk}\rho_{pp}\rho_{rr}}}
\sum_{iklpqr}|u_{ik}u_{lp}u_{qr}u_{i}u_lu_q|\\
&+\frac{12|\rho_{kpr}\rho_{lpr}|}{\sqrt{\rho_{kk}\rho_{pp}\rho_{rr}}\sqrt{\rho_{rr}\rho_{pp}\rho_{ll}}}
\frac{1}{\sqrt{\rho_{kk}\rho_{ll}}}\sum_{ikl}\lll(|u_{il}u_{ik}|+|u_iu_{ikl}|\rrr)\\
&+\frac{24|\rho_{kpr}\rho_{lpr}|}{\sqrt{\rho_{kk}\rho_{pp}\rho_{rr}}\sqrt{\rho_{rr}\rho_{pp}\rho_{ll}}}
\frac{1}{\sqrt{\rho_{kk}\rho_{ll}}}\sum_{ikql}|u_iu_qu_{ik}u_{ql}|\\
&=I+II+III+IV+V.
\end{aligned}
\ee
\be\label{ptd38}
\begin{aligned}
I&\leq8\sigma^{1/2}\times2^3\times\frac{2^{3/2}H^{3/2}}{K^{3/2}w^{3/2}}
\times2^4\\
&\times3(2w^3H+2)\lll(\sum|\rho_{ipr}|+24w^3H+24\rrr)\\
&\leq\frac{C}{K^{3/2}}\sigma^{1/2}\sum_{ipr}|\rho_{ipr}|+\frac{C}{K^{3/2}}\sigma^{1/2}\\
&\leq\frac{C}{K^{3/2}}\sigma+\frac{C}{K^{3/2}}\sigma^{1/2},\\
\end{aligned}
\ee
\be\label{ptd39}
\begin{aligned}
II&\leq48\sigma^{1/2}\times2^3\times\frac{2^{3/2}H^{3/2}}{K^{3/2}w^{3/2}}\times2^5\lll[(2w^3H+2)^3\right.\\
&\left.+(2w^3H+2)\lll(\sigma^{1/2}2^{9/2}w^{9/2}H^{3/2}+24w^3H+24\rrr)\rrr]\\
&\leq\frac{C}{K^{3/2}}\sigma^{1/2}+\frac{C}{K^{3/2}}\sigma,\\
\end{aligned}
\ee
\be\label{ptd40}
\begin{aligned}
III&\leq32\sigma^{1/2}\times2^3\times\frac{2^{3/2}H^{3/2}}{K^{3/2}w^{3/2}}\times2^6(2w^3H+2)\\
&\leq\frac{C}{K^{3/2}}\sigma^{1/2},\\
\end{aligned}
\ee
\be\label{ptd41}
\begin{aligned}
IV&\leq12\sigma\times8\frac{H}{Kw}\times2^3\lll[(2w^3H+2)^2\right.\\
&\left.+2^{9/2}w^{9/2}H^{3/2}\sigma^{1/2}+24w^3H+24\rrr]\\
&\leq\frac{C}{K}\sigma+\frac{C}{K}\sigma^{3/2},\\
\end{aligned}
\ee
\be\label{ptd42}
V\leq24\sigma\times\frac{8H}{Kw}\times2^4(2w^3H+2)^2\leq\frac{C}{K}\sigma.
\ee
Combining equations \eqref{ptd37}--\eqref{ptd42}, we get
\be\label{ptd43}
|S_1|\leq\frac{C}{K^{3/2}}\sigma+\frac{C}{K^{3/2}}\sigma^{1/2}
+\frac{C}{K}\sigma+\frac{C}{K}\sigma^{3/2}.
\ee
Similarly, we have
\be\label{ptd44}
\begin{aligned}
|S_2|&\leq\frac{8|\rho_{kpr}|}{\sqrt{\rho_{kk}\rho_{pp}\rho_{rr}}}\frac{1}{\sqrt{\rho_{kk}\rho_{pp}\rho_{rr}}}
\sum_{ikpr}\lll[3|u_{ik}u_{pr}|+4|u_ku_{ipr}|\rrr]\\
&+\sum_{klipr}
\frac{12|\rho_{kli}\rho_{kpr}\rho_{lpr}|}{\sqrt{\rho_{kk}\rho_{ii}\rho_{ll}}\sqrt{\rho_{kk}\rho_{pp}\rho_{rr}}\sqrt{\rho_{rr}\rho_{pp}\rho_{ll}}}
\sqrt{\rho_{ii}}\\
&\leq8\sigma^{1/2}\times2^{3}\times\frac{2^{3/2}H^{3/2}}{K^{3/2}w^{3/2}}\times2^4\lll[3(2w^2H+2)^2\right.\\
&\left.+4\lll(\sigma^{1/2}2^{9/2}w^{9/2}H^{3/2}+24w^3H+24\rrr)\rrr]\\
&+12\sigma^{3/2}\times2^5\times2^{1/2}w^{3/2}H^{1/2}\\
&\leq\frac{C}{K^{3/2}}\sigma^{1/2}+\frac{C}{K^{3/2}}\sigma+C\sigma^{3/2}.
\end{aligned}
\ee
Therefore,
\be\label{ptd45}
\begin{aligned}
L\sigma&\geq\sigma^2-\frac{C}{K^3}\sigma^{1/2}-\frac{C}{K^2}\sigma-\frac{C}{K^{3/2}}\sigma\\
&-\frac{C}{K^{3/2}}\sigma^{1/2}-\frac{C}{K}\sigma-\frac{C}{K}\sigma^{3/2}-C\sigma^{3/2}.\\
\end{aligned}
\ee
Moreover,
\begin{align*}
\frac{\sigma^{1/2}}{K^3}&\leq\frac{\delta^4}{4}\sigma^2+\frac{3}{4\delta^{4/3}}\times\frac{4}{K^4},\\
\frac{\sigma}{K^2}&\leq\frac{\delta^2}{2}\sigma^2+\frac{1}{2\delta^2}\times\frac{1}{K^4},\\
\frac{\sigma}{K^{3/2}}&\leq\frac{\sigma^{1/2}}{K^3}+\sigma^{3/2},\\
\sigma^{3/2}&\leq\delta\sigma^2+\frac{\sigma}{\delta K^2},\\
\frac{\sigma}{K}&\leq\delta\sigma^2+\frac{1}{\delta K^2},\\
\frac{\sigma^{1/2}}{K^{3/2}}&\leq\frac{\sigma}{K}+\frac{1}{K^2},\\
\frac{\sigma^{3/2}}{K}&\leq\delta\sigma^2+\frac{\sigma}{\delta K^2}.\\
\end{align*}
Therefore, choose $\delta>0$ small, we have
\be\label{ptd46}
L\sigma\geq\frac{\sigma^2}{2}-\frac{C}{K^4}.
\ee
\end{proof}

Following the idea of \cite{Ia92}, we obtain an estimate for $K^2\sigma$ and $K|\nabla H|$ near $(0,0)$
and finally prove Theorem \ref{Intt2}.
\begin{lemma}\label{ptdl2}
Let $X: (S^2, g)\rightarrow (\mathbb{H}^3, h)$ be a $C^2$ isometric embedding, and $g\in C^5$ is a metric with positive Gauss curvature $K.$
Then
\[K^2\sigma\leq C,\;K|\nabla H|\leq C,\]
where $C$ is a constant depending only on the maximum of $\frac{1}{w}$ near $(0,0),$
the maximum of $|K|_{C^2},$ the maximum of $K^{1/2}|\nabla^3K|,$ the maximum of $|\nabla^2K|/K,$
and the maximum of mean curvature $H.$
\end{lemma}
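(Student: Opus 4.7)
The plan is to apply the maximum principle to the auxiliary function $\Psi = K^2\sigma$, using the differential inequality of Lemma \ref{ptdl1} as the driving estimate; the bound on $K|\nabla H|$ will then follow by algebraic extraction. Introducing a smooth cutoff in a fixed coordinate patch if necessary (since $K$ is bounded below away from $(0,0)$, boundary points cause no difficulty), one may assume $\Psi$ attains its maximum at an interior point $P_0$. The critical point condition $\Psi_i = 0$ gives
\[
\sigma_i = -\frac{2\sigma K_i}{K},
\]
while the ellipticity of $L$ forces $L\Psi(P_0)\leq 0$.

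Expanding, $L\Psi = K^2 L\sigma + \sigma L(K^2) + 4K\rho^{ij}K_i\sigma_j$, and the critical point relation converts the cross term into $-8\sigma\rho^{ij}K_iK_j$, while $L(K^2) = 2K\,LK + 2\rho^{ij}K_iK_j$. Lemma \ref{ptdl1} then gives $K^2 L\sigma \geq \tfrac12 K^2\sigma^2 - C/K^2$. The residual terms $2\sigma K\,LK - 6\sigma\rho^{ij}K_iK_j$ are controlled via the diagonalization bound $\rho^{ii}\leq 2H/(Kw)$ from \eqref{ptd32} together with the hypotheses on $H$, $|\nabla^2 K|/K$, and $|\nabla K|^2/K$; the last of these follows automatically from the $C^2$ bound on $K$ via the elementary inequality $|\nabla K|^2 \leq 2\|K\|_{C^2}K$ for any nonnegative $C^2$ function. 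One therefore obtains
\[
0 \geq L\Psi(P_0) \geq \tfrac12 K^2\sigma^2 - \frac{C}{K^2} - C\sigma,
\]
and Young's inequality $C\sigma \leq \tfrac14 K^2\sigma^2 + C/K^2$ yields $K^4\sigma^2 \leq C$, hence $K^2\sigma\leq C$.

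For $K|\nabla H|$, the Codazzi equations give $|\nabla H|^2 \leq C|\nabla h|^2$ with $C$ depending only on the background geometry. Differentiating the identity $\rho_{ij} = u^2 w h_{ij}$ from \eqref{ptd3} expresses $h_{ij,k}$ in terms of $\rho_{ijk}$ plus bounded lower order data, and at the diagonalization point one has $|\rho_{ijk}|^2 \leq \sigma\,\rho_{ii}\rho_{jj}\rho_{kk} \leq C\sigma H^3$ by \eqref{ptd31}. Combining with $\sigma \leq C/K^2$ already established yields $K^2|\nabla H|^2 \leq CH^3 + CK^2 \leq C$, which is the second assertion.

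The main obstacle is the middle step: after fully unwinding the consequences of Lemma \ref{ptdl1}, auxiliary terms of the form $\sigma^{3/2}/K$ and $\sigma/K^{\alpha}$ arise both from the chain rule inside $L\sigma$ and from the $\sigma L(K^2)$ contribution, and each must be absorbed into the leading $K^2\sigma^2$ through Young's inequality with precisely the $1/K^2$ slack that appears on the right. The borderline hypotheses $|\nabla^2 K|/K$ and $K^{1/2}|\nabla^3 K|$ in the statement are calibrated exactly so that this absorption succeeds without leaving any uncontrolled $1/K^{\beta}$ residue.
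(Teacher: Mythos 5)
Your argument is correct and follows essentially the same route as the paper: maximum principle applied to $f=K^2\sigma$, the critical-point relation $K^2\sigma_i=-2KK_i\sigma$, the differential inequality of Lemma \ref{ptdl1}, and the bound $\rho^{ii}\le 2H/(Kw)$ from \eqref{ptd32} to absorb the residual terms into $\tfrac12 K^2\sigma^2$, followed by the pointwise estimate $|\rho_{ijk}|^2\le\sigma\,\rho_{ii}\rho_{jj}\rho_{kk}$ to extract the bound on $K|\nabla H|$. The only cosmetic difference is that you expand $L(K^2\sigma)$ by the product rule, whereas the paper first substitutes $f_i=0$ into $f_{ij}$ and then applies $\rho^{ij}$; these are the same computation.
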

\begin{proof}
Recall that
\be\label{ptd47}
\frac{\det \mbox{Hess}\rho}{\det g}=u^4w^2K.
\ee
Without loss of generality, it's assumed that $|\nabla u|(0,0)=0$
and that the inner normal of embedding is chosen so that the mean curvature $H$
is positive. Now let
\be\label{ptd48}
0\leq f=K^2\sigma.
\ee
If $f$ has its maximum away from $(0,0),$ then an estimate follows since
the equation \eqref{ptd47} is uniformly elliptic there. So assume $f$
achieves its maximum at $Q,$ where $Q$ is in some small ball centered at $(0,0).$
Then,
\[f_i(Q)=0,\;\{f_{ij}(Q)\}\leq0\]
holds at $Q.$ Since $\{\rho^{ij}\}>0,$
\be\label{ptd49}
\rho^{ij}f_{ij}(Q)\leq 0.
\ee
Computing, we have
\be\label{ptd50}
f_i=2KK_i\sigma+K^2\sigma_i,
\ee
\be\label{ptd51}
f_{ij}=2K_iK_j\sigma+2KK_{ij}\sigma+2KK_i\sigma_j+2KK_j\sigma_i+K^2\sigma_{ij}.
\ee
Combining \eqref{ptd50} and \eqref{ptd51} we get,
\be\label{ptd52}
f_{ij}=2KK_{ij}\sigma-6K_iK_j\sigma+K^2\sigma_{ij}.
\ee
Therefore,
\be\label{ptd53}
0\geq\rho^{ij}f_{ij}=2\rho^{ij}K_{ij}K\sigma-6\rho^{ij}K_iK_j\sigma+K^2\rho^{ij}\sigma_{ij},
\ee
and
\be\label{ptd54}
K^4\rho^{ij}\sigma_{ij}\leq6\rho^{ij}K_iK_jf-2\rho^{ij}K_{ij}Kf.
\ee
Next, \eqref{ptd1} may be rewritten as
\be\label{ptd55}
\begin{aligned}
K^4L\sigma&=K^4\rho^{ij}\sigma_{ij}+\frac{4K^4u_i\sigma_i}{(1+|\nabla u|^2)u}\\
&\geq \frac{1}{2}f^2-C.\\
\end{aligned}
\ee
Thus,
\be\label{ptd56}
\begin{aligned}
f^2&\leq2C+2K^4\rho^{ij}\sigma_{ij}+\frac{8K^4u_i\sigma_i}{(1+|\nabla u|^2)u}\\
&\leq2C+12\rho^{ij}K_iK_jf-4\rho^{ij}K_{ij}Kf-\frac{16K^3u_iK_i}{(1+|\nabla u|^2)u}.\\
\end{aligned}
\ee
Since $|\nabla u|<1$ near $(0,0),$ we have $|u_iK_i|\leq|\nabla K|.$
Moreover, at $Q$,
\be\label{ptd57}
\begin{aligned}
0&\leq\rho^{ij}K_iK_j=\rho^{11}K_1^2+\rho^{22}K_2^2\\
&=\frac{K_1^2}{\rho_{11}}+\frac{K_2^2}{\rho_{22}}\leq\frac{2H}{Kw}\lll(K_1^2+K_2^2\rrr)\\
&=\frac{2H}{w}\frac{|\nabla K|^2}{K}.\\
\end{aligned}
\ee
Similarly,
\be\label{ptd58}
\begin{aligned}
\lll|\rho^{ij}K_{ij}K\rrr|&=\frac{K_{11}K}{\rho_{11}}+\frac{K_{22}K}{\rho_{22}}\\
&\leq\frac{2H}{w}|\nabla^2K|.
\end{aligned}
\ee
So, we have
\be\label{ptd59}
f^2(Q)\leq C+Cf(Q).
\ee
This shows that $K^2\sigma$ is bounded.

Next, we are going to estimate $K|\nabla H|.$
We know that
\be\label{ptd60}
2H=\frac{1}{w}\lll(\delta_{ij}-\frac{u_iu_j}{w^2}\rrr)\rho_{ij}.
\ee
Differentiating and multiplying by $K$ we get
\be\label{ptd61}
2K\nabla H=K\nabla\lll(\frac{1}{w}\lll(\delta_{ij}-\frac{u_iu_j}{w^2}\rrr)\rrr)\rho_{ij}
+\frac{1}{w}\lll(\delta_{ij}-\frac{u_iu_j}{w^2}\rrr)K\rho_{ijk}.
\ee
Since the first term is bounded by $|X|_{C^2},$ we only need to find a bound for the second term.
\be\label{ptd62}
\begin{aligned}
\lll|\frac{1}{w}\lll(\delta_{ij}-\frac{u_iu_j}{w^2}\rrr)K\rho_{ijk}\rrr|^2
&\leq\max_{i,j}\lll|\frac{1}{w}\lll(\delta_{ij}-\frac{u_iu_j}{w^2}\rrr)\rrr|^2
\lll(\sum_{i,j,k}|K\rho_{ijk}|\rrr)^2\\
&\leq\max_{i,j}\lll|\frac{1}{w}\lll(\delta_{ij}-\frac{u_iu_j}{w^2}\rrr)\rrr|^2
8\lll(\sum_{i,j,k}K^2\rho_{ijk}^2\rrr)\\
&=\max_{i,j}\lll|\frac{1}{w}\lll(\delta_{ij}-\frac{u_iu_j}{w^2}\rrr)\rrr|^2
8\lll(\sum_{i,j,k}K^2\frac{\rho_{ijk}^2}{\rho_{ii}\rho_{jj}\rho_{kk}}\rho_{ii}\rho_{jj}\rho_{kk}\rrr)\\
&\leq\max_{i,j}\lll|\frac{1}{w}\lll(\delta_{ij}-\frac{u_iu_j}{w^2}\rrr)\rrr|^2
8K^2\sigma\times8w^9H^3.
\end{aligned}
\ee
We can see that every term on the right hand side is bounded. Therefore, we have $K|\nabla H|$ is bounded.
\end{proof}

\subsection{Proof of Theorem 1.2}
\label{spot}
Now, we are ready to prove Theorem \ref{Intt2}. The proof is the same as in the Euclidean case (see \cite{Ia92}).
For completeness, we include it here.

From Theorem \ref{Intt1}, we know that there exists $X_0\in C^{1,1}$ realizing $g^0.$ Now choose a sequence of
$C^{\infty}$ isometric embedding $\{X_\epsilon\}$ and corresponding metrics $\{g^\epsilon\}$ such that
\[\|g^\epsilon-g^0\|_{C^5(g^0)}\rightarrow 0,\]
\[\|X_\epsilon-X_0\|_{C^{1,1}(g^0)}\rightarrow 0,\]
\[\mbox{the extrinsic Gauss curvature $K^\epsilon>0$,}\]
and
\[ H^\epsilon\geq \frac{c}{2},\]
where by assumption $0<c=\lim\inf_{Q\rightarrow P_i}H(Q).$ Then, from Lemma \ref{ptdl2},
\be\label{spot1}
K^\epsilon|\nabla_{g^\epsilon}H^\epsilon|_{g^\epsilon}\leq C
\ee
on $B_{r_i}(P_i)$ where $r_i$ and $C$ are independent of $\epsilon.$
Also, on $S^2\backslash B_{r_i}(P_i),$ the $\{K^\epsilon\}$ are bounded from below.
Therefore, by standard elliptic theory,
\be\label{spot2}
K^\epsilon|\nabla_{g^\epsilon}H^\epsilon|_{g^\epsilon}\leq C
\ee
on $S^2\backslash B_{r_i}(P_i)$ and $C$ is independent of $\epsilon.$
Hence,
\be\label{spot3}
K^\epsilon|\nabla_{g^\epsilon}H^\epsilon|_{g^\epsilon}\leq C
\ee
on $S^2.$

Now, since $H^\epsilon\geq c/2>0,$ there exists $r_2>0$ independent of $\epsilon$
such that in $B_{r_i}(P_i)$
\be\label{spot4}
(H^\epsilon)^2-K^\epsilon\geq c_0>0.
\ee
Hence,
\[\kappa_{1,\epsilon}=H^\epsilon-\sqrt{(H^\epsilon)^2-K^\epsilon}\]
is differentiable,
and
\[\nabla_{g^\epsilon}\kappa_{1,\epsilon}
=\frac{-K^\epsilon\nabla_{g^\epsilon}H^\epsilon}{\sqrt{(H^\epsilon)^2-K^\epsilon}\lll(H^\epsilon+\sqrt{(H^\epsilon)^2-K^\epsilon}\rrr)}
+\frac{\nabla_{g^\epsilon}K^\epsilon}{2\sqrt{(H^\epsilon)^2-K^\epsilon}}.\]
From \eqref{spot3} and \eqref{spot4} it's easy to see that
\[|\nabla_{g^\epsilon}\kappa_{1,\epsilon}|\leq C\]
where $C$ is independent of $\epsilon.$

 \bigskip

\section{A priori bounds in n-dimensions}\label{apb}
\subsection{Formulas on hypersurfaces and some basic identities}
\label{foh}
In this section we recall some basic identities on a hypersurface that were derived in \cite{GS11}
by comparing the induced hyperbolic and Euclidean metrics. In the following, we identify
$\mathbb{H}^{n+1}$ with the upper half-space model.

Let $\Sigma$ be a hypersurface in $\mathbb{H}^{n+1}$. We shall use $g$ and $\nabla$
to denote the induced hyperbolic metric and Levi-Civita connection
on $\Sigma$, respectively. As $\Sigma$ is also a submanifold of $\mathbb{R}^{n+1}$,
we shall usually distinguish a geometric quantity with respect to the
Euclidean metric by adding a `tilde' over the corresponding hyperbolic
quantity.
For instance, $\tg$ denotes the induced  metric on $\Sigma$
from $\mathbb{R}^{n+1}$, and $\tnabla$ is its Levi-Civita connection.

Let $\vx$ be the position vector of $\Sigma$ in $\mathbb{R}^{n+1},$ and set
\[ u = \vx \cdot \bf{e} \]
where $\bf{e}$ is the unit vector in the positive
$x_{n+1}$ direction in $\mathbb{R}^{n+1}$, and `$\cdot$' denotes the Euclidean
inner product in $\mathbb{R}^{n+1}$.
We refer to $u$ as the {\em height function} of $\Sigma$.

We assume $\Sigma$ is orientable and let ${\bf n}$ be
a (global)
unit normal vector field to $\Sigma$ with respect to the hyperbolic metric.
This also determines a unit normal $\nu$ to $\Sigma$ with respect to the
Euclidean metric by the relation
\[ \nu = \frac{{\bf n}}{u}. \]
We denote $ \nu^{n+1} = \bf{e} \cdot \nu$.

Let $(z_1, \ldots, z_n)$ be local coordinates and
\[ \tau_i = \frac{\partial}{\partial z_i}, \;\; i = 1, \ldots, n. \]
The hyperbolic and Euclidean metrics of $\Sigma$ are given by
\[ g_{ij} = \langle \tau_i, \tau_j \rangle, \;\;
   \tg_{ij} = \tau_i \cdot \tau_j = u^2 g_{ij}. \]
The second fundamental forms are
\begin{equation}
\label{eq2.10}
 \begin{aligned}
   h_{ij} \,& = \langle D_{\tau_i} \tau_j, {\bf n} \rangle
              = - \langle D_{\tau_i} {\bf n}, \tau_j \rangle, \\
\thh_{ij} \,& = \nu \cdot \tD_{\tau_i} \tau_j
              = - \tau_j \cdot \tD_{\tau_i} \nu,
  \end{aligned}
\end{equation}
where $D$ and $\tD$ denote the Levi-Civita connection of $\mathbb{H}^{n+1}$
and $\mathbb{R}^{n+1}$, respectively.
 The following relations are well known (see \eqref{dmhf5}):
\begin{equation}
\label{eq2.20}
 h_{ij} = \frac{1}{u} \thh_{ij} + \frac{\nu^{n+1}}{u^2} \tg_{ij}
 \end{equation}
and 
\begin{equation}
\label{eq2.30}
\kappa_i = u \tilde{\kappa}_i + \nu^{n+1}, \;\;\; i = 1, \cdots, n
\end{equation}
where $\kappa_1, \cdots, \kappa_n$ and
$\tilde{\kappa}_1, \cdots, \tilde{\kappa}_n$ are the hyperbolic and
Euclidean principal curvatures, respectively.
The Christoffel symbols are related by the formula
\begin{equation}
\label{eq2.40}
\Gamma_{ij}^k = \tilde{\Gamma}_{ij}^k - \frac{1}{u}
   (u_i \delta_{kj} + u_j \delta_{ik} - \tg^{kl} u_l \tg_{ij}).
\end{equation}
It follows that for $v \in C^2 (\Sigma)$
\begin{equation}
\label{eq2.50}
\nabla_{ij} v = v_{ij} - \Gamma_{ij}^k v_k
  = \tilde{\nabla}_{ij} v + \frac{1}{u}
    (u_i v_j + u_j v_i - \tg^{kl} u_l v_k \tg_{ij})
\end{equation}
where
\[ v_i = \frac{\partial v}{\partial z_i}, \;
   v_{ij} = \frac{\partial^2 v}{\partial z_i z_j}, \; \mbox{etc.} \]
In particular,
\begin{equation}
\label{eq2.60}
\begin{aligned}
\nabla_{ij} u
 \,& = \tilde{\nabla}_{ij} u + \frac{2 u_i u_j}{u}
       - \frac{1}{u} \tg^{kl} u_k u_l \tg_{ij}
  \end{aligned}
\end{equation}
and
\begin{equation}
\label{eq2.70}
\nabla_{ij} \frac{1}{u}
  = - \frac{1}{u^2} \tilde{\nabla}_{ij} u
  + \frac{1}{u^3} \tg^{kl} u_k u_l \tg_{ij}.
\end{equation}
Moreover,
\begin{equation}
\label{eq2.80}
\begin{aligned}
\nabla_{ij} \frac{v}{u}
 \,&  = v \nabla_{ij} \frac{1}{u}
        + \frac{1}{u} \tilde{\nabla}_{ij} v
        - \frac{1}{u^2} \tg^{kl} u_k v_l \tg_{ij}.
 \end{aligned}
\end{equation}

In $\mathbb{R}^{n+1}$,
\begin{equation}
\label{eq2.90}
\begin{aligned}
             \tg^{kl} u_k u_l
      \,& = |\tilde{\nabla} u|^2 = 1 - (\nu^{n+1})^2 \\
            \tilde{\nabla}_{ij} u
      \,& = \thh_{ij} \nu^{n+1}.
 \end{aligned}
\end{equation}
Therefore, by \eqref{eq2.30} and \eqref{eq2.70},
\begin{equation}
\label{eq2.100}
\begin{aligned}
\nabla_{ij} \frac{1}{u}
 \,&  = - \frac{\nu^{n+1}}{u^2} \thh_{ij}
  + \frac{1}{u^3} (1 - (\nu^{n+1})^2) \tg_{ij} \\
 \,&  = \frac{1}{u} (g_{ij} - \nu^{n+1} h_{ij}).
 \end{aligned}
\end{equation}
We note that \eqref{eq2.80} and \eqref{eq2.100} still hold for
general local frames $\tau_1, \ldots, \tau_n$.

\subsection{Proof of Theorem 1.3}\label{apbp}
For simplicity, in this subsection we are going to do calculations in the upper half-space model.

Let $g$ be a $C^4$ metric of nonnegative sectional curvature on $S^n,$
and $X:(S^n, g)\rightarrow \mathbb{H}^{n+1}$ be a $C^4$ isometric embedding into the hyperbolic
space $\mathbb{H}^{n+1}$ (we assume that the critical points are finite and isolated).
Then as in \cite{Po64}, for any point $P\in X(S^n),$ we can always construct a convex cap
$\omega,$ such that $P\in \omega$ and $\partial\omega$ is a $n-1$ dimensional submanifold with positive curvatures.
Using the techniques introduced in Section \ref{dmh}, we can always assume that $\omega$
can be represented as a graph over $\mathbb{R}^n\times\{0\}$ in the half-space model, and on $\partial\omega$
the height function $u\equiv c>0.$

When $\omega$ is strictly convex, we can apply results of \cite{GSS09} directly and obtain a priori bounds
for the principal curvatures which only depend on the metric $g$ on $X^{-1}(\omega)\subset S^n.$

When there exists a critical point, say $P_0\in\omega.$ By lifting $\omega=\{(x, u(x))|\; x\in \mathbb{R}^n\}$
to $\omega^\epsilon=\{(x, u(x)+\epsilon)|\; x\in \mathbb{R}^n\},$ $\epsilon>0$ small, we obtain a
sequence of $g^\epsilon\in C^4(X^{-1}(\omega))$ with positive sectional curvatures which are isometrically
embeddable  in $\mathbb{H}^{n+1},$ and $g^\epsilon$ converges to $g$ on $X^{-1}(\omega).$

Now, we are going to generalize the result we proved in Section \ref{mce} to $n$-dimensions ($n\geq 3$), and in the following,
for convenience, we drop the dependence on $\epsilon$ in our notation.

\begin{theorem}\label{apbt}
Let $g$ be a $C^4$ metric with sectional curvature $\geq -1$ and let $X: (S^n, g)\rightarrow \mathbb{H}^{n+1}$ be a $C^4$ isometric embedding.
Suppose the sectional curvature $S$ of $g$ satisfies\\
(1) $S(P_i,\chi)=-1$ for some $\chi\in\bigwedge^2 T_{P_i}M$; $1\leq i\leq n,$\\
(2) $S(Q)>-1,$ for any $Q\neq P_i,$ where $\{P_i\}\in S^n$ are finite isolated points.\\
Let $H$ be the trace of the second fundamental form of $X,$ and
let $R$ be the extrinsic scalar curvature of $g.$ Then the following inequality holds:
\be\label{apb1}
H^2\leq C_1|\Delta R|+C_2(R^2+R),
\ee
where $C_1,$ $C_2$ only depends on the metric $g$ and dimension $n.$
\end{theorem}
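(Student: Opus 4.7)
The plan is to mimic the two-dimensional maximum-principle argument of Theorem \ref{mcet1}, with Simons' identity in the hyperbolic ambient space playing the role of the explicit Mainardi--Codazzi calculation of \eqref{eqm37}--\eqref{eqm40}. As in the 2D proof, first approximate $g$ by smooth metrics $g^\epsilon$ with sectional curvature strictly greater than $-1$; by \cite{GSS09}, each $g^\epsilon$ admits a smooth strictly convex isometric embedding, and it suffices to establish \eqref{apb1} with $\epsilon$-independent constants. The convex-cap estimates of \cite{GSS09}, applied to a regular level set of the height function, already give $H \leq C$ outside any fixed neighborhood of the singular set $\{P_i\}$. We therefore localize near one singular point and, via Subsection \ref{dmhb}, represent the neighborhood as a vertical graph $x_{n+1}=u(x)$ in the upper half-space model with $\nabla u(0)=0$.

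The analytic input is a formula for $\Delta_g R$. Combining Simons' identity for hypersurfaces in a space form of curvature $-1$ with the Gauss relation $R = H^2 - |A|^2 + \mathrm{const}$, one obtains an identity of the schematic form
\begin{equation*}
\Delta_g R = 2H\,\Delta_g H - 2\,h^{ij}\nabla_i\nabla_j H + 2\bigl(|\nabla H|^2 - |\nabla h|^2\bigr) + \mathcal{P}(H,|A|,R),
\end{equation*}
where $h^{ij}$ denotes the inverse of the positive-definite matrix $\{h_{ij}\}$ and $\mathcal{P}$ is a universal polynomial in its arguments. This is the $n$-dimensional counterpart of \eqref{eqm40}, with the contraction $h^{ij}\nabla_i\nabla_j H$ playing the role of the two-dimensional adjugate quadratic form $NH_{11}-2MH_{12}+LH_{22}$.

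Set $f = e^{\alpha\phi}H$, where $\phi$ is the natural half-space weight analogous to the ball-model factor $\rho=2/(1-|x|^2)$ used in Theorem \ref{mcet1} (an affine function of $1/u$ is natural). If $\max f$ is attained on the boundary of the chosen neighborhood the estimate is already available, so assume an interior maximum $Q$. Then $H_i(Q)=-\alpha\phi_i H$ and $\{f_{ij}(Q)\}\leq 0$; contracting the Hessian inequality against the positive-definite tensor $\{h^{ij}\}$ supplies an upper bound for $h^{ij}\nabla_i\nabla_j H$ in terms of $H$, $\nabla\phi$ and $\nabla^2\phi$. Substituting this into the above formula, using the Codazzi equations to reorganize the gradient cross-terms, and choosing $\alpha$ large together with the neighborhood small enough (the higher-dimensional analog of the condition $|x|^2<1/100$ leading to \eqref{eqm60}) so that the $\phi$-dependent coefficient of $H^2$ becomes strictly negative, one arrives at an inequality $\Delta_g R \leq -cH^2 + C(R^2+R)$ at $Q$, which rearranges to \eqref{apb1}.

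The principal obstacle is the higher-dimensional tensor bookkeeping. In two dimensions the second fundamental form is a $2\times 2$ matrix one can diagonalize by hand, and the polynomial $\mathcal{P}$ is only quadratic in $H$; for general $n$ it contains $|A|^4$ and $H\,\mathrm{tr}(A^3)$, and a Newton-type inequality (e.g.\ $n\,\mathrm{tr}(A^3)\geq H|A|^2$ for positive principal curvatures) is required to force the right cancellation so that every quartic term becomes absorbable into $R^2+R$. A secondary subtlety is the sign-definite Kato-type difference $|\nabla h|^2-|\nabla H|^2\geq 0$: after the substitution $H_i=-\alpha\phi_i H$ at $Q$, one must verify that the surviving $|\nabla h|^2$ is either dominated by $-cH^2$ or absorbed into the $R$-dependent side via the Codazzi constraint.
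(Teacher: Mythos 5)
Your overall strategy coincides with the paper's: a maximum-principle argument for $f=e^{\alpha\phi}H$ with $\phi$ an affine function of the reciprocal height $1/u$ in the half-space model, a Simons/Gauss identity for $\Delta_g R$, and a Newton--Maclaurin type inequality to absorb the quartic curvature terms $|A|^4$ and $H\,\mathrm{tr}(A^3)$ into $R^2+R$. The localization near the singular points, the approximation by strictly convex embeddings, and the target inequality $\Delta_g R\leq -cH^2+C(R^2+R)$ at the interior maximum all match the paper.

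However, the central contraction step as you describe it would fail. First, the tensor $h^{ij}$ in the identity $\Delta R = 2H\Delta H - 2h^{ij}\nabla_{ij}H+\cdots$ is the second fundamental form with indices raised by $g$, not the inverse of $\{h_{ij}\}$; these agree (up to the factor $\det h$) only when $n=2$, where the adjugate $\mathrm{tr}(A)I-A$ equals $\det(A)A^{-1}$ --- which is presumably the source of the conflation, since in two dimensions the quadratic form $NH_{11}-2MH_{12}+LH_{22}$ is simultaneously both. Second, and more seriously, contracting $\{f_{ij}\}\leq 0$ against a positive tensor yields an \emph{upper} bound on $h^{ij}\nabla_{ij}H$; since this term enters $\Delta R$ with a minus sign, that bounds $\Delta R$ from \emph{below}, the wrong direction, and it leaves $2H\Delta H$ untreated. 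The correct move (the paper's \eqref{abp6}) is to contract $\{f_{ij}\}\leq 0$ against the first Newton tensor $Hg^{ij}-h^{ij}$, whose eigenvalues $H-\kappa_i=\sum_{j\neq i}\kappa_j\geq 0$ make it positive semidefinite by convexity; this controls the full combination $2(Hg^{ij}-h^{ij})H_{ij}=2H\Delta H-2h^{ij}H_{ij}$ in one stroke. One then needs the specific Hessian identity $\nabla_{ij}\frac{1}{u}=\frac{1}{u}\lll(g_{ij}-\nu^{n+1}h_{ij}\rrr)$ so that the contraction produces the good term $-\alpha(n-1)H^2/u$, which for $\alpha$ large dominates the $+(2n-2)H^2$ contributed by the ambient curvature $-1$ through the Gauss equation. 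With these corrections your outline reproduces the paper's proof; your worry about the Kato-type term is moot, since $-2|\nabla A|^2\leq 0$ already has the favorable sign and is simply discarded.
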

\begin{proof}
Similar to Section \ref{mce}, we consider a function $f=e^{\frac{\alpha}{u}}H$ in $B_r(X(P_0))$ with $\alpha>0$ to be chosen later. By the procedure introduced in Section
\ref{dmh}, we can assume $u(X(P_0))>1$ and $\nabla u(X(P_0))=0$ in the upper half-space model.
Choosing $r>0$ small enough, we have $\lll(\nu^{n+1}\rrr)^2>1-\lll(\min u/4\max u\rrr)^2$ in $B_r(X(P_0)).$
Assume $f$ achieves its maximum at an interior point $Q\in B_r(X(P_0)).$ We choose a local orthonormal frame
$\tau_1,\cdots, \tau_n$ around $Q.$ For convenience we shall write
$v_{ij}=\nabla_{ij}v,\;h_{ijk}=\nabla_kh_{ij},\;h_{ijkl}=\nabla_{lk}h_{ij},$ etc.
Then, at this point, we have
\be\label{apb2}
f_i=\alpha\lll(\frac{1}{u}\rrr)_iH+H_i=0,
\ee
\be\label{apb3}
f_{ij}=e^{\frac{\alpha}{u}}\lll(\alpha H\lll(\frac{1}{u}\rrr)_{ij}\right.
\left.-\alpha^2H\lll(\frac{1}{u}\rrr)_i\lll(\frac{1}{u}\rrr)_j+H_{ij}\rrr)\leq0.
\ee
Since $R=H^2-trA^2,$ by \eqref{apb2}, we have
\be\label{abp4}
\begin{aligned}
\Delta R&=2H\Delta H+2|\nabla H|^2-2|\nabla A|^2-2h^{ij}\Delta h_{ij}\\
&\leq2H\Delta H+2\alpha^2\lll|\nabla\lll(\frac{1}{u}\rrr)\rrr|^2H^2-2h^{ij}H_{ij}
+2h^{ij}\lll(R_{jlik}h^{lk}+R^k_{lik}h^l_j\rrr).\\
\end{aligned}
\ee
Also,
\be\label{apb01}
\begin{aligned}
&2(trA^2)^2-2trA^3trA\\
&=2(H^2-R)^2-2trA^3trA\\
&\leq 2(H^2-R)^2-trA\lll\{2(trA)^3-3[(trA)^2-trA^2]trA\rrr\}\\
&\leq -H^2R+2R^2,
\end{aligned}
\ee
where we used
$3\lll(\sum x_i^2\rrr)(\sum x_i)\leq\lll(\sum x_i\rrr)^3+2\sum x_i^3,$ for $x_i>0,\; 1\leq i\leq n.$

Denote $d^2=1-\lll(\nu^{n+1}\rrr)^2.$ Combining \eqref{apb01} and \eqref{abp4}, we get
\be\label{apb5}
\begin{aligned}
\Delta R&\leq2H\Delta H-2h^{ij}H_{ij}+2\frac{\alpha^2d^2}{u^2}H^2
+2h^{ij}\lll[\lll(h_{ji}h_{lk}-h_{jk}h_{li}\rrr)\right.\\
&\left.-\lll(g_{ji}g_{lk}-g_{jk}g_{li}\rrr)\rrr]h^{lk}+2h^{ij}
\lll[\lll(h^k_ih_{lk}-h^k_kh_{li}\rrr)-(g^k_ig_{lk}-g^k_kg_{li})\rrr]h^l_j\\
&=2H\Delta H-2h^{ij}H_{ij}+2\frac{\alpha^2d^2}{u^2}H^2+2\lll[(trA^2)^2-trA^3trA\rrr]\\
&-2h^{ij}g_{ij}g_{lk}h^{lk}+2h^{ij}g_{jk}g_{li}h^{lk}-2h^{ij}g^k_ig_{lk}h^l_j+2nh^{ij}g_{li}h^l_j\\
&\leq2H\Delta H-2h^{ij}H_{ij}+2\frac{\alpha^2d^2}{u^2}H^2-RH^2+2R^2-2H^2+2n(H^2-R)\\
&\leq2(Hg^{ij}-h^{ij})H_{ij}+2\frac{\alpha^2d^2}{u^2}H^2-RH^2+(2n-2)H^2+C_2(R^2+R),\\
\end{aligned}
\ee
where we used \eqref{eq2.90}.

At $Q,$
\[\lll(Hg^{ij}-h^{ij}\rrr)f_{ij}\leq 0.\]
We have
\be\label{abp6}
\begin{aligned}
\lll(Hg^{ij}-h^{ij}\rrr)H_{ij}&\leq\lll(Hg^{ij}-h^{ij}\rrr)
\lll(-\alpha H\lll(\frac{1}{u}\rrr)_{ij}+\alpha^2H\lll(\frac{1}{u}\rrr)_i\lll(\frac{1}{u}\rrr)_j\rrr)\\
&\leq-\alpha H\lll(Hg^{ij}-h^{ij}\rrr)\lll(\frac{1}{u}g_{ij}-\frac{\nu^{n+1}}{u}h_{ij}\rrr)
+\alpha^2H^2g^{ij}\lll(\frac{1}{u}\rrr)_i\lll(\frac{1}{u}\rrr)_j\\
&\leq\frac{-\alpha H^2(n-1)}{u}+\frac{\alpha HR}{u}\nu^{n+1}+\frac{\alpha^2d^2H^2}{u^2}.\\
\end{aligned}
\ee
Therefore,
\be\label{abp7}
\begin{aligned}
\Delta R&\leq\frac{-2\alpha(n-1)}{u}H^2+\frac{2\alpha HR}{u}\sqrt{1-d^2}\\
&+\frac{4\alpha^2d^2H^2}{u^2}-RH^2+(2n-2)H^2+C_2(R^2+R).\\
\end{aligned}
\ee
By
\[RH\lll(2\alpha u\sqrt{1-d^2}-H\rrr)\leq R\alpha^2u^2(1-d^2)\]
we have
\be\label{apb8}
\begin{aligned}
u^2\Delta R&\leq-2\alpha u(n-1)H^2+4\alpha^2d^2H^2+(2n-2)u^2H^2\\
&+\alpha^2u^2(1-d^2)R+C_2(R^2+R).\\
\end{aligned}
\ee
Choosing $\alpha=\frac{n-1}{\min_{X(S^n)}u}\lll(\max_{X(S^n)}u\rrr)^2,$ we have
\be\label{apb9}
u^2\Delta R\leq\frac{-1}{2}(n-1)^2H^2+C_2(R^2+R).
\ee
Thus
\be\label{abp10}
H^2\leq C_1|\Delta R|+C_2(R^2+R).
\ee
\end{proof}

\bigskip
\footnotesize
\noindent{\bf{Acknowledgments.}}
The authors would like to thank our advisors, Professor Joel Spruck and Professor William Minicozzi, for their support and guidance.

\end{document}